\documentclass[12pt,reqno]{amsart}
\usepackage{amsmath}
\usepackage{amssymb}
\usepackage{amstext}
\usepackage{a4wide}
\usepackage{txfonts}
\usepackage{graphicx} 
\usepackage{bm}
\usepackage[numbers,sort&compress]{natbib}
\allowdisplaybreaks \numberwithin{equation}{section}
\usepackage{color}
\usepackage{cases}

\numberwithin{equation}{section}

\newtheorem{theorem}{Theorem}[section]
\newtheorem{proposition}[theorem]{Proposition}
\newtheorem{corollary}[theorem]{Corollary}
\newtheorem{lemma}[theorem]{Lemma}

\newtheorem*{Yudovich's Theorem}{Yudovich's Theorem}

\theoremstyle{definition}

\newtheorem{definition}[theorem]{Definition}

\theoremstyle{remark}
\newtheorem{remark}[theorem]{Remark}

\begin{document}

\title
[Nonlinear stability of plane ideal flows in a periodic channel]{Nonlinear stability of plane ideal flows in a periodic channel}

 \author{Guodong Wang}

\address{School of Mathematical Sciences, Dalian University of Technology, Dalian 116024, PR China}
\email{gdw@dlut.edu.cn}


\begin{abstract}
In this paper, we establish two stability theorems for steady or traveling solutions of the two-dimensional incompressible Euler equation in a finite periodic channel, extending Arnold's classical work from the 1960s. Compared to Arnold's approach, we employ a compactness argument rather than relying on the definiteness of the second variation of the energy-Casimir functional. The isovortical property of the Euler equation and Burton's rearrangement theory play an essential role in our analysis.
As a corollary, we prove for the first time the existence of a class of stable non-shear flows when the ratio of the channel's height to its length is less than or equal to $\sqrt{3}/2.$ Two rigidity results are also obtained as byproducts. 
\end{abstract}

\maketitle
\tableofcontents
\section{Introduction and main results}\label{sec1}
\subsection{Euler equation in a periodic channel}
 Consider the Euler equation of an incompressible inviscid fluid of unit density in a finite periodic channel $D:=\mathbb T_L\times(0,H)$:
 \begin{equation}\label{euler}
\begin{cases}
\partial_t\mathbf v+(\mathbf v\cdot\nabla)\mathbf v=-\nabla P, &t\in\mathbb R,\,\mathbf x=(x_1,x_2)\in D,\\
\nabla\cdot\mathbf v=0,\\
\end{cases}
\end{equation}
where $L,H>0,$ $\mathbf v(t,\mathbf x)=(v_1(t,\mathbf x),v_2(t,\mathbf x))$ is the velocity field, and $P(t,\mathbf x)$ is the scalar pressure.
The following impermeability conditions are imposed on $\partial D=\{x_2=0\}\cup\{x_2=H\}$:
 \begin{equation}\label{bdc}
 v_2|_{x_2=0}=0,\quad  v_2|_{x_2=H}=0.
 \end{equation}
The divergence-free condition $\nabla\cdot\mathbf v=0$ and the impermeability conditions \eqref{bdc} ensure the existence of a stream function $\psi$ such that  
 \[
\mathbf v= \nabla^\perp \psi,  \quad\psi \,\mbox{ is piecewise constant on}\,\partial D,
\]
where $\nabla^\perp=(\partial_{x_2},-\partial_{x_1})$.
Moreover, such $\psi$ is unique up to an arbitrary constant.  
Denote by $\omega$ the scalar vorticity and by $Q$ the flux of the fluid in the $x_1$-direction, i.e., 
\[\omega:=\partial_{x_1} v_2-\partial_{x_2}v_1=-\Delta\psi,\quad Q:=\int_0^Hv_1 dx_2=\psi\big|^{x_2=H}_{x_2=0.}\]
Note that $Q$ does not depend on  $t$ or $x_1$.
Up to an arbitrary constant, the stream function $\psi$ can be expressed in terms of $\omega$ and  $Q$ as follows:
 \begin{equation}\label{psioq}
 \psi=\mathcal G\omega+\frac{Q}{H}x_2,
 \end{equation}
where $\mathcal G$ is the inverse of $-\Delta$ subject to zero Dirichlet boundary conditions.
From \eqref{psioq}, we see that the fluid motion in $D$ can be described from three equivalent perspectives: the velocity $\mathbf v,$ the stream function $\psi$, and the vorticity-flux pair $(\omega,Q)$.  In terms of the vorticity-flux pair $(\omega,Q)$, the Euler equation \eqref{euler} can be written as
\begin{equation}\label{vfeq}
\partial_t\omega+\left(\nabla^\perp \mathcal G\omega+\frac{Q }{H}\mathbf e_1\right)\cdot\nabla\omega=0,
\end{equation}
where $\mathbf e_1:=(1,0)$. In terms of the stream function, the Euler equation \eqref{euler} can be written as
\begin{equation}\label{sfeq}
\partial_t(\Delta\psi)+ \nabla^\perp \psi\cdot\nabla(\Delta\psi)=0.
\end{equation}
Global well-posedness of the two-dimensional incompressible Euler equation in various function spaces is a classical result and can be found in many classical papers or textbooks, such as \cite{BV, De, DM, MB, MP, Y}. In particular, given smooth initial data, there is a smooth global solution.

Based on \eqref{vfeq} and \eqref{sfeq},  we can specify the effect of the flux $Q$ on the fluid motion by a straightforward computation.
\begin{lemma}\label{lm001}
For any $\lambda\in\mathbb R$, 
\begin{itemize}
\item[(i)] $(\omega(t,x_1,x_2),{Q})$ solves \eqref{vfeq} if and only if $\left(\omega\left(t,x_1 -\lambda t,x_2\right),{Q}+\lambda H\right)$ solves \eqref{vfeq}, and
\item[(ii)]$\psi(t,x_1,x_2)$ solves \eqref{sfeq} if and only if $\psi\left(t,x_1 -\lambda t,x_2\right)+\lambda x_2$ solves \eqref{sfeq}.
\end{itemize}
\end{lemma}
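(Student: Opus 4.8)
The plan is to prove both statements by direct substitution into the respective equations, using the chain rule to track how the traveling change of variables $x_1\mapsto x_1-\lambda t$ interacts with each term, and then checking that the extra $\lambda$-dependent contributions cancel exactly. The two transformations are in fact linked through the representation \eqref{psioq}: writing $\tilde\psi(t,\mathbf x):=\psi(t,x_1-\lambda t,x_2)+\lambda x_2$, one has $-\Delta\tilde\psi=\omega(t,x_1-\lambda t,x_2)$ and boundary flux $Q+\lambda H$, so in principle it suffices to establish one part and transfer it. Because the computation is short, however, I would verify each directly.

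For part (ii), set $y_1:=x_1-\lambda t$, $y_2:=x_2$, and compute the three ingredients of \eqref{sfeq} for $\tilde\psi$. Since $\lambda x_2$ is harmonic, $\Delta\tilde\psi(t,\mathbf x)=(\Delta\psi)(t,y_1,y_2)$; differentiating in time through the traveling argument produces $\partial_t(\Delta\tilde\psi)=(\partial_t\Delta\psi)(t,\cdot)-\lambda(\partial_{x_1}\Delta\psi)(t,\cdot)$. The added linear term contributes a constant horizontal boost to the velocity, $\nabla^\perp\tilde\psi=(\nabla^\perp\psi)(t,\cdot)+(\lambda,0)$, while $\nabla(\Delta\tilde\psi)=(\nabla\Delta\psi)(t,\cdot)$. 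Hence the advection term acquires exactly $+\lambda(\partial_{x_1}\Delta\psi)(t,\cdot)$, which cancels the cross term from $\partial_t$, and the total reduces to the left-hand side of \eqref{sfeq} for $\psi$, evaluated at $(t,y_1,y_2)$.

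For part (i), the one point requiring justification beyond the chain rule is that the solution operator $\mathcal G$ commutes with horizontal translations: since $D=\mathbb T_L\times(0,H)$ and the Dirichlet conditions are imposed only on $\{x_2=0\}\cup\{x_2=H\}$, the problem is invariant under $x_1\mapsto x_1+c$, so $\mathcal G[\omega(t,\cdot-\lambda t\mathbf e_1)]=(\mathcal G\omega)(t,\cdot-\lambda t\mathbf e_1)$. Granting this, set $\tilde\omega(t,\mathbf x):=\omega(t,x_1-\lambda t,x_2)$ and $\tilde Q:=Q+\lambda H$; the flux increment produces precisely $\frac{\tilde Q}{H}\mathbf e_1=(\frac{Q}{H}+\lambda)\mathbf e_1$, so the advection velocity becomes $(\nabla^\perp\mathcal G\omega+\frac{Q}{H}\mathbf e_1)(t,\cdot)+\lambda\mathbf e_1$. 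As in (ii), the extra $\lambda\mathbf e_1\cdot\nabla\tilde\omega=\lambda(\partial_{x_1}\omega)(t,\cdot)$ cancels the $-\lambda(\partial_{x_1}\omega)(t,\cdot)$ coming from $\partial_t\tilde\omega$.

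In both cases the full expression reduces to the left-hand side of the original equation evaluated at $(t,x_1-\lambda t,x_2)$, which vanishes whenever $(\omega,Q)$ (resp.\ $\psi$) solves the equation. The reverse implication — and hence the ``if and only if'' — follows by applying the same transformation with $\lambda$ replaced by $-\lambda$, since the two maps are mutually inverse. The conceptual crux, rather than a genuine obstacle, is the bookkeeping of the Galilean boost: the constant horizontal velocity $\lambda\mathbf e_1$, generated by the flux shift $\lambda H$ in the vorticity formulation or by the added stream function $\lambda x_2$ in the stream-function formulation, is exactly what absorbs the $-\lambda\partial_{x_1}$ term created by differentiating the traveling coordinate in $t$. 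I do not anticipate any real difficulty once the translation-invariance of $\mathcal G$ is noted.
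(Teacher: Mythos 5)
Your proof is correct, and it carries out precisely the direct substitution/chain-rule computation that the paper itself describes only as ``a straightforward computation'' without writing it out; the key cancellation of the $-\lambda\partial_{x_1}$ term against the Galilean boost, and the translation-invariance of $\mathcal G$ in $x_1$, are exactly the points that need checking. Nothing further is required.
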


Obviously, any $\bar\psi\in C^2(\bar D)$ that depends only on $x_2$ is a steady solution to \eqref{sfeq}. Such solutions are called shear solutions. 
Another important class of solutions is associated with a certain semilinear elliptic equation. More specifically, according to Lemma \ref{lm001}, 
if $\bar\psi\in C^2(\bar D)$ satisfies
\begin{equation}\label{tvs1}
\begin{cases}
-\Delta \bar\psi =g(\bar\psi -\lambda x_2),\quad\mathbf x\in D,\\
 \bar\psi\,\mbox{ is piecewise constant on}\,\partial D,\\
\end{cases}
\end{equation}
for some $g\in C^1(\mathbb R)$ and some $\lambda\in\mathbb R$, then $\psi(t,x_1,x_2):= \bar\psi(x_1-\lambda t,x_2)$ solves \eqref{sfeq}. This is a steady or traveling solution, and is steady if $\lambda=0$ or $\bar\psi$ depends only on $x_2.$

The following three conservation laws play an important role in this paper, and their proofs can be found in many standard textbooks, such as \cite{BV,MB,MP}.
\begin{itemize}
  \item [(C1)] The kinetic energy $\mathcal E$ of the fluid is conserved,  
      \[ \mathcal E :=\frac{1}{2}\int_D|\mathbf v|^2d\mathbf x= \frac{1}{2}\int_D|\nabla \psi|^2d\mathbf x.\]
      By integration by parts,  $\mathcal E$  can be expressed in terms of $(\omega,Q)$ as follows:
\[\mathcal E=\frac{1}{2}\int_D\omega\mathcal G\omega  d\mathbf x+  \frac{LQ^2}{2H}.\]
Define
\begin{equation}\label{defoe}
E(\omega):=\frac{1}{2}\int_D\omega\mathcal G\omega  d\mathbf x.
\end{equation}
Then $E$ is also conserved.
  \item [(C2)]The impulse $I$ (linear momentum parallel to the $x_1$-axis) is conserved,  
      \begin{equation}\label{defoi}
      I(\omega):=\int_Dx_2\omega d\mathbf x.
      \end{equation}
     For $v\in L^1(D)$, define
     \[\mathcal I_v:=\{w\in L^1(D)\mid I(w)=I(v)\}.\]
     Then the impulse conservation can be written as 
     \[\omega(t,\cdot)\in\mathcal I_{\omega(0,\cdot)}\quad\forall\,t\in\mathbb R.\] 
  \item [(C3)]The isovortical property holds, i.e., \[\omega(t,\cdot)\in\mathcal R_{\omega(0,\cdot)}\quad\forall\,t\in\mathbb R,\] 
      where $\mathcal R_v$ denotes the rearrangement class of any Lebesgue measurable function $v:D\to\mathbb R,$ i.e.,
      \begin{equation}\label{deforc}
      \mathcal R_{v}:=\left\{w:D\to\mathbb R\mid |\{ w >a\}|=|\{ v >a\}|\,\,\,\forall\,a\in\mathbb R\right\}.
      \end{equation} 
     Here and hereafter, we denote by $|S|$ the Lebesgue measure of any Lebesgue measurable set $S\subset\mathbb R^2.$   
 \end{itemize}
Note that as a consequence of (C3), there exist infinitely many integral invariants of the form
      $\int_DF(\omega)d\mathbf x,$
      where $F:\mathbb R\to\mathbb R$ is Borel measurable.
      These integral invariants are called Casimirs.

\subsection{Arnold's stability theorems}
For the reader's convenience, we provide a brief review of Arnold's seminal paper \cite{A1}, in which he proposed the energy-Casimir method and established two types of sufficient conditions for the stability of two-dimensional steady Euler flows.
We note that throughout this paper, ``stability" always refers to Lyapunov stability under the nonlinear dynamics governed by \eqref{euler}.

Given a steady flow with vorticity $\bar\omega,$ Arnold computed the first and the second variations of the following energy-Casimir functional $EC$ at $\bar\omega:$
\[EC(\omega)=E(\omega)-\int_DF(\omega)d\mathbf x,\]
where $F\in C^2(\mathbb R)$. Here, for simplicity, we only consider flows with zero flux (i.e., $Q=0$); in this case, $E(\omega)$ is exactly the kinetic energy.
It was shown that
\[ \delta EC|_{\bar\omega}(v)=\int_D v\mathcal G\bar\omega d\mathbf x-\int_Df(\bar\omega) vd\mathbf x,\quad f:=F',\]
\[ \delta^2 EC|_{\bar\omega}(v)=\int_Dv\mathcal Gvd\mathbf x-\int_Df'(\bar\omega) v^2d\mathbf x.\]
To ensure that the first variation  at $\bar\omega$ vanishes,  it suffices to require
 \begin{equation}\label{1stv}
\mathcal G\bar\omega=f(\bar\omega).
\end{equation}
As to the second variation, it is clear that 
if 
\begin{equation}\label{leq0}
f'(\bar\omega)< 0\,\,\,\mbox{ in}\,\,\,D,
\end{equation} then  $\delta^2 EC|_{\bar\omega}$ is positive definite in $L^2(D)$; and if  
\begin{equation}\label{geq0}
 f'(\bar\omega)>c_{ar}\,\,\,\mbox{ in}\,\,\,D,\quad c_{ar}:=\sup\left\{\int_D v\mathcal Gvd\mathbf x\mid \|v\|_{L^2(D)}=1 \right\},
\end{equation}
then  $\delta^2 EC|_{\bar\omega}$ is negative definite in $L^2(D)$.
Based on the above observation and using the fact that 
$EC$ is a conserved functional, Arnold proved the stability of the steady flow in $L^2(D)$ (the vorticity space) under the assumptions \eqref{1stv} and \eqref{leq0} (the first stability theorem), or under the assumptions \eqref{1stv} and \eqref{geq0} (the second stability theorem). The detailed statements and proofs can also be found in \cite{A2,A3,MP}.
In our setting of a finite periodic channel, it can be proved that \[c_{ar}=\frac{1}{\lambda_1}=\frac{H^2}{\pi^2},\]
 where $\lambda_1$ is the first eigenvalue of $-\Delta$ subject to zero Dirichlet boundary conditions.  As an example, the steady flow with vorticity $\bar\omega(x_1,x_2)=\sin x_2$ is stable if $H<\pi$.

Arnold's work has been discussed and extended by many authors from different perspectives; see \cite{AHMR,CG,GS,MS,Taylor,WS,WG1,WG2}. However, most of these works focus on the definiteness condition, which significantly restricts the applicability of the method, especially when the domain possesses certain symmetry. In fact, if a steady flow satisfies the definiteness condition, it must be the \emph{unique} minimizer or maximizer of some conserved functional (such as the 
$EC$ functional). On the other hand, the conserved functional is necessarily invariant under the action of the domain's symmetry group, which in turn forces the steady flow to be invariant under the same symmetry group.
 For example,  for any steady flow satisfying the conditions of  Arnold's stability theorems:
\begin{itemize} 
\item [(i)] If the domain is a disk, then the flow must be circular;
    \item [(ii)] If the domain is a flat two-torus or a two-sphere, then the flow must be trivial; 
    \item [(iii)] If the domain is a finite periodic channel, then the flow must be a shear flow.
 \end{itemize}
  See also \cite[Proposition 1.1]{CDG} for a more comprehensive discussion.

 To overcome the limitations caused by the definiteness condition,  
  Wirosoetisno and Shepherd \cite{WS99}  proposed using other Casimirs to bound the variation of the perturbed solutions. As a result, they proved the orbital stability of a class of sinusoidal flows on a flat 2-torus. Recently, Wirosoetisno and Shepherd's method was used by Constantin and Germain \cite{CG} to prove the stability of degree-2 zonal Rossby-Haurwitz waves on a rotating sphere.
However,  Wirosoetisno and Shepherd's method involves a large amount of complex calculations, making it inconvenient to apply.

Another effective approach that can avoid the definiteness condition is the compactness argument, which is more flexible and has a strong variational flavor. This approach was first used by Burton \cite{BAR} to study the stability of steady flows in a bounded, simply connected domain. More specifically, through a compactness argument in combination with his rearrangement theory, Burton \cite{BAR}  established a general stability criterion, stating that  \emph{any steady flow corresponding to an isolated local minimizer/maximizer of the kinetic energy (as a functional of the vorticity) relative to the rearrangement class of an $L^p$ function must be stable,  where $1<p<\infty.$} The term ``local minimizer/maximizer" is interpreted in the sense of the $L^p$ norm. It is easy to check that Arnold's stability theorems are, in fact, two special cases of Burton's stability criterion: \emph{the vorticity of the steady flow in Arnold's first (second) stability theorem corresponds to the unique minimizer (maximizer) of the kinetic energy relative to its rearrangement class.} Other steady flows that satisfy the conditions  of Burton's stability criterion include those studied in \cite{BG,CWCV,CWJMFM,LZCMP,WJDE,WTAMS,WMA,WG1,WG2}. 

In recent years, the authors of \cite{WZCV,CWZ,Wdisk} have proved the (orbital) stability of certain explicit flows in domains with good symmetry by using a compactness argument and studying the equimeasurable partition of a certain finite-dimensional function space.
The flows considered in \cite{WZCV,CWZ,Wdisk} neither obey the symmetry of the domain nor satisfy the isolatedness condition in Burton's stability criterion. In a broad sense, these results can be regarded as the "critical" case of Arnold's second stability theorem. The motivation for this paper is to extend the ideas in \cite{WZCV,CWZ,Wdisk} to investigate the stability of steady flows in a finite periodic channel, with particular emphasis on the existence of stable \emph{non-shear} flows.

It should be noted that the extension of Arnold's \emph{first} stability theorem is relatively straightforward. In fact, for any $v\in L^p(D),$ $1<p<\infty,$ the following three statements are equivalent (cf. \cite[Theorem 2.1]{BMcL}):
 \begin{itemize}
  \item[(i)] $v$ is a local minimizer of $E$ relative to $\mathcal R_{v}$;
 \item [(ii)]  $v$ is the unique minimizer of $E$ relative to $\mathcal R_{v}$;
 \item[(iii)] there exists a decreasing function $\phi:\mathbb R\to\mathbb R\cup\{\pm\infty\}$\footnote{That is, $\phi(s_1)\leq \phi(s_2)$ whenever $s_1\leq s_2$.} such that $ v=\phi(\mathcal G v)$ a.e. in $D$.
 \end{itemize}
If the vorticity of a steady flow satisfies any of the above three conditions, then stability follows from Burton's stability criterion in \cite{BAR}.

 \subsection{Main results}
 
Before presenting the main results, we introduce the following Laplace eigenvalue problem:
 \begin{equation}\label{lep1}
\begin{cases}
-\Delta u=\Lambda u,\quad\quad\mathbf x\in D,\\
 u|_{x_2=0}=u|_{x_2=H}={\rm constant},\\
 \int_Dud\mathbf x=0.
 \end{cases}
\end{equation}
Denote by $\Lambda_1$ the first eigenvalue, and by $\mathbb E_1$ the first eigenspace. According to Proposition \ref{propevp} in Section \ref{sec2}, $\Lambda_1$ and $\mathbb E_1$ are determined by ${H}/{L}$ as follows.
\begin{itemize}
\item[(1)] If ${H}/{L}>{\sqrt{3}}/{2}$, then
\begin{equation}\label{hl1}
\begin{cases}
\Lambda_1= \frac{4\pi^2}{H^2},\\
 \mathbb E_1={\rm span}\left\{\sin \left(\frac{2\pi x_2}{H}\right),\,\cos\left(\frac{2\pi x_2}{H}\right)\right\}.
\end{cases}
\end{equation}
\item[(2)] If ${H}/{L}<{\sqrt{3}}/{2}$, then 
\begin{equation}\label{hl2}
\begin{cases}
\Lambda_1=\frac{4\pi^2}{L^2} + \frac{\pi^2}{H^2},\\
 \mathbb E_1={\rm span}\left\{\cos\left(\frac{2\pi x_1}{L}\right)\sin\left(\frac{\pi x_2}{H}\right),\,
\sin\left(\frac{2\pi x_1}{L}\right)\sin\left(\frac{\pi x_2}{H}\right)\right\}.
\end{cases}
\end{equation}
\item[(3)] If  ${H}/{L}={\sqrt{3}}/{2}$, then 
\begin{equation}\label{hl3}
\begin{cases}
 \Lambda_1=\frac{4\pi^2}{H^2}=\frac{4\pi^2}{L^2} + \frac{\pi^2}{H^2},\\
 \mathbb E_1={\rm span}\left\{\sin \left(\frac{2\pi x_2}{H}\right),\,\cos\left(\frac{2\pi x_2}{H}\right),\,\cos\left(\frac{2\pi x_1}{L}\right)\sin\left(\frac{\pi x_2}{H}\right),\,
\sin\left(\frac{2\pi x_1}{L}\right)\sin\left(\frac{\pi x_2}{H}\right)\right\}.
\end{cases}
\end{equation}
\end{itemize}

  Our main results provide two sufficient conditions for the stability of  steady/traveling flows given by \eqref{tvs1}, which can be regarded as extensions of Arnold's second stability  theorem. Note that, according to Arnold's stability theorems, a flow given by \eqref{tvs1} is stable if $g'<0$ (the first stability theorem) or $0<g'<H^2/\pi^2$ (the second stability theorem). In this paper, we show that the condition  $0<g'<H^2/\pi^2$  in the second stability theorem can be relaxed to
$0\leq g'\leq \Lambda_1$.

 Our first result is stated as follows.
  \begin{theorem}\label{thm1}
 Suppose that $\bar\psi\in C^2(\bar D)$ satisfies \eqref{tvs1}, where $g\in C^1(\mathbb R)$ and $\lambda\in\mathbb R$.   Denote $\bar\omega:=-\Delta\bar\psi$.
   Fix $1<p<\infty$.
  If 
   \begin{equation}\label{gcond1}
   0\leq \min_{\bar D}g'(\bar\psi-\lambda x_2)\leq \max_{\bar D}g'(\bar\psi-\lambda x_2)<\Lambda_1,
   \end{equation}
      then  
      \begin{itemize}
      \item[(i)]$\bar\psi$ depends only on $x_2$, and 
      \item[(ii)] the associated steady flow is stable in the following sense:   for any $\varepsilon>0,$ there exists some $\delta>0,$ such that for any sufficiently smooth Euler flow with vorticity $\omega(t,\mathbf x)$,  
   \begin{equation}\label{stabless1}
   \|\omega(0,\cdot)-\bar\omega\|_{L^p(D)} <\delta \quad\Longrightarrow\quad \|\omega(t,\cdot)- \bar\omega  \|_{L^p(D)}<\varepsilon\quad \forall\,t\in\mathbb R.
  \end{equation}
 \end{itemize}
     \end{theorem}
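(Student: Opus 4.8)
The plan is to characterize $\bar\omega$ variationally as the \emph{unique} maximizer of an impulse--corrected energy over the weak closure of its rearrangement class, and then to convert this into Lyapunov stability through a compactness argument powered by the conservation laws (C1)--(C3). Set $\Psi:=\bar\psi-\lambda x_2$, so that \eqref{tvs1} reads $-\Delta\Psi=g(\Psi)$ and $\bar\omega=g(\Psi)$, while \eqref{psioq} gives $\Psi=\mathcal G\bar\omega-\mu x_2$ with $\mu:=\lambda-Q/H$. The relevant conserved functional in the traveling frame is $\mathcal H(\omega):=E(\omega)-\mu I(\omega)$; its first variation at $\bar\omega$ is $v\mapsto\int_D\Psi v\,d\mathbf x$ (with $v=\omega-\bar\omega$), so $\bar\omega$ is a critical point of $\mathcal H$ on $\mathcal R_{\bar\omega}$. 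Writing $M:=\max_{\bar D}g'(\Psi)<\Lambda_1$, I will use throughout the spectral inequality $\int_D v\mathcal Gv\,d\mathbf x\le\Lambda_1^{-1}\int_D v^2\,d\mathbf x$ for every mean--zero $v$; this is the $L^2$ form of \eqref{lep1}, and it is precisely where the threshold $\Lambda_1$ (rather than the first Dirichlet eigenvalue) appears, since $\int_D v=0$ holds automatically on a rearrangement class.

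For (i) I would differentiate the semilinear equation in $x_1$. The function $\phi:=\partial_{x_1}\Psi=\partial_{x_1}\bar\psi$ solves $-\Delta\phi=g'(\Psi)\phi$ in $D$, vanishes on $\partial D$ (as $\Psi$ is piecewise constant there), and, being an $x_1$--derivative of a periodic function, has no $x_1$--independent Fourier mode. Testing with $\phi$ gives $\int_D|\nabla\phi|^2\,d\mathbf x=\int_D g'(\Psi)\phi^2\,d\mathbf x\le M\int_D\phi^2\,d\mathbf x$, whereas expanding $\phi$ in Dirichlet modes and discarding the absent $x_1$--independent part yields $\int_D|\nabla\phi|^2\,d\mathbf x\ge\min_{m,n\ge1}\big((2\pi m/L)^2+(n\pi/H)^2\big)\int_D\phi^2\,d\mathbf x\ge\Lambda_1\int_D\phi^2\,d\mathbf x$. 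Since $M<\Lambda_1$, these force $\phi\equiv0$, so $\bar\psi$ depends only on $x_2$. This step uses only the upper bound in \eqref{gcond1}.

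The heart of the matter is to show $\bar\omega$ is the unique maximizer of $\mathcal H$ over the weak--$L^p$ closure $\overline{\mathcal R_{\bar\omega}}$. Expanding the quadratic functional exactly,
\[
\mathcal H(\omega)-\mathcal H(\bar\omega)=\int_D\Psi v\,d\mathbf x+\tfrac12\int_D v\mathcal Gv\,d\mathbf x,\qquad v:=\omega-\bar\omega,
\]
and bounding the last term by $\tfrac{1}{2\Lambda_1}\|v\|_{L^2}^2$ via the spectral inequality, it remains to prove the matching rearrangement estimate $\int_D\Psi(\bar\omega-\omega)\,d\mathbf x\ge\tfrac{1}{2M}\|v\|_{L^2}^2$; together with $M<\Lambda_1$ this gives $\mathcal H(\omega)-\mathcal H(\bar\omega)\le\tfrac12(\Lambda_1^{-1}-M^{-1})\|v\|_{L^2}^2<0$ whenever $\omega\ne\bar\omega$. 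To get the rearrangement estimate I would take the convex primitive $F$ of the (generalized) inverse of $g$; since $0\le g'\le M$, $F$ is $M^{-1}$--strongly convex, and $\int_D F(\omega)\,d\mathbf x\le\int_D F(\bar\omega)\,d\mathbf x$ on $\overline{\mathcal R_{\bar\omega}}$ by weak lower semicontinuity of convex Casimirs, so strong convexity delivers the quadratic surplus. \textbf{This is the step I expect to be the main obstacle.} When $g$ has flat pieces (the lower bound $g'\ge0$ may be attained) its inverse jumps, and $F'(\bar\omega)$ no longer equals the true potential $\Psi$ pointwise; the estimate must then be realized through Burton's rearrangement theory, using that $\bar\omega$ is the unique maximizer of the \emph{linear} map $\omega\mapsto\int_D\Psi\omega\,d\mathbf x$ on $\overline{\mathcal R_{\bar\omega}}$ (valid provided $\Psi$ has no level set of positive measure) together with the strong--convexity gain. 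Handling the flat regions of $g$ and the associated level sets of $\Psi$ is the delicate point.

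Granting uniqueness, (ii) follows by a contradiction scheme. If stability failed, there would be $\varepsilon_0>0$ and smooth solutions $\omega_k$ with $\|\omega_k(0)-\bar\omega\|_{L^p}\to0$ but $\|\omega_k(t_k)-\bar\omega\|_{L^p}=\varepsilon_0$ at a first such time $t_k$. Put $\tilde\omega_k:=\omega_k(t_k)$. By (C3), $\tilde\omega_k\in\mathcal R_{\omega_k(0)}$, hence $\|\tilde\omega_k\|_{L^p}=\|\omega_k(0)\|_{L^p}\to\|\bar\omega\|_{L^p}$ and the sequence is bounded in $L^p$; by (C1)--(C2), $\mathcal H(\tilde\omega_k)=\mathcal H(\omega_k(0))\to\mathcal H(\bar\omega)$. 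Extracting a weak limit $\omega_*$ and using the weak continuity of $E$ (compactness of $\mathcal G$) and of $I$, one finds $\mathcal H(\omega_*)=\mathcal H(\bar\omega)$ with $\omega_*\in\overline{\mathcal R_{\bar\omega}}$, whence $\omega_*=\bar\omega$ by uniqueness. Finally $\|\tilde\omega_k\|_{L^p}\to\|\bar\omega\|_{L^p}=\|\omega_*\|_{L^p}$ combined with $\tilde\omega_k\rightharpoonup\omega_*$ and the uniform convexity of $L^p$ ($1<p<\infty$) upgrades weak to strong convergence (Radon--Riesz), contradicting $\|\tilde\omega_k-\bar\omega\|_{L^p}=\varepsilon_0$. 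Since $\bar\omega$ is a shear profile, no symmetry orbit intervenes and \eqref{stabless1} holds as stated.
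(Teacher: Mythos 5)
Your overall strategy---characterize $\bar\omega$ variationally on its rearrangement class using a dual (Legendre-transformed) Casimir, then upgrade to Lyapunov stability by a compactness/Radon--Riesz argument driven by (C1)--(C3)---is the same as the paper's. Two of your choices are genuinely different and both are legitimate. First, you treat the impulse as a Lagrange multiplier, maximizing $E-\mu I$ over $\overline{\mathcal R_{\bar\omega}}$, whereas the paper maximizes $E$ over the hard-constrained set $\mathcal R_{\bar\omega}\cap\mathcal I_{\bar\omega}$ (see \eqref{mmp}); the two are interchangeable here, though in your contradiction scheme you still need the ``follower'' construction of Lemma \ref{lem313} to place the weak limit of $\omega_k(t_k)\in\mathcal R_{\omega_k(0)}$ inside $\overline{\mathcal R_{\bar\omega}}$, since the perturbed vorticities are rearrangements of $\omega_k(0)$, not of $\bar\omega$. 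Second, your proof of the rigidity assertion (i) --- differentiate \eqref{tvs1} in $x_1$, note that $\phi=\partial_{x_1}\bar\psi$ vanishes on $\partial D$ and has no $x_1$-independent mode, and compare the Rayleigh quotient $(2\pi/L)^2+(\pi/H)^2\geq\Lambda_1$ against $\max g'<\Lambda_1$ --- is a direct linearized spectral argument, different from the paper's, which deduces (i) from the translation invariance of $\mathcal M_{\bar\omega}$ together with $\mathcal M_{\bar\omega}=\{\bar\omega\}$ (Proposition \ref{keyprop1}). Your version is more elementary and self-contained; the paper's version is softer and reuses the variational structure.

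The genuine gap is exactly where you flagged it: the quadratic rearrangement estimate $\int_D\Psi(\bar\omega-\omega)\,d\mathbf x\geq\tfrac{1}{2M}\|v\|_{L^2}^2$ in the degenerate case permitted by \eqref{gcond1}, namely $\min g'=0$. Your fallback---invoking uniqueness of the maximizer of the linear functional $\omega\mapsto\int_D\Psi\omega\,d\mathbf x$ on $\overline{\mathcal R_{\bar\omega}}$---requires $\Psi$ to have no level set of positive measure, which is an extra hypothesis not present in the theorem and which genuinely fails for flows with vorticity plateaus. But the detour is unnecessary: the correct formulation is the subgradient (Fenchel--Young) version of strong convexity. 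Since $g$ is nondecreasing and $M$-Lipschitz (after extending $g$ as in \eqref{aspg1} so that $\hat G$ is finite), the conjugate $\hat G$ is $\tfrac1M$-strongly convex in the sense that $\hat G(s)\geq\hat G(s_0)+p(s-s_0)+\tfrac{1}{2M}(s-s_0)^2$ for \emph{every} $p\in\partial\hat G(s_0)$, and $\bar\omega=g(\Psi)$ guarantees $\Psi(\mathbf x)\in\partial\hat G(\bar\omega(\mathbf x))$ pointwise even across flat pieces of $g$; no invertibility of $g$ and no condition on the level sets of $\Psi$ is needed. This is precisely what the paper's chain \eqref{casicom} implements, using \eqref{hatg} together with the Taylor bound \eqref{texp} on the $G$ side and closing the estimate at the stream-function level with Lemma \ref{lm022} rather than at the vorticity level. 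With that substitution your argument closes; as written, the uniqueness step is incomplete in the borderline case $\min_{\bar D}g'(\bar\psi-\lambda x_2)=0$ that the theorem is specifically designed to cover.
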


Our second result is the following.
   \begin{theorem}\label{thm2}
 Suppose that $\bar\psi\in C^2(\bar D)$ satisfies \eqref{tvs1}, where $g\in C^1(\mathbb R)$ and $\lambda\in\mathbb R$.   Denote $\bar\omega:=-\Delta\bar\psi$.
   Fix $1<p<\infty$.
   If
     \begin{equation}\label{gcond2}
   0\leq \min_{\bar D}g'(\bar\psi-\lambda x_2)\leq \max_{\bar D}g'(\bar\psi-\lambda x_2)\leq \Lambda_1,
   \end{equation}
     then
        \begin{itemize}
      \item[(i)] $\bar\psi$ can be decomposed as
     \[\bar\psi=\bar\psi^e+\bar\psi^s,\]
where $\bar\psi^e\in\mathbb E_1$ and  $\bar\psi^s$ depends only on $x_2$, and
\item[(ii)] the associated steady/traveling flow is orbitally stable in the following sense:  for any $\varepsilon>0,$ there exists some $\delta>0,$ such that for any sufficiently smooth Euler flow with vorticity $\omega(t,\mathbf x)$, if 
    \[ \|\omega(0,\cdot)-\bar\omega\|_{L^p(D)} <\delta, \]
    then for any $t\in\mathbb R,$ there exists some $\alpha_t\in\mathbb R$ such that 
  \[ \|\omega(t,\cdot)- \bar\omega(\cdot+\alpha_t\mathbf e_1) \|_{L^p(D)}<\varepsilon.\]
 \end{itemize}
\end{theorem}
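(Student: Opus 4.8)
The plan is to treat part (i) by a rigidity (eigenvalue) argument and part (ii) by a variational/compactness argument built on the conserved functionals and Burton's rearrangement theory. Throughout I would set $\mu:=\lambda-Q/H$ and $\bar\phi:=\mathcal G\bar\omega-\mu x_2=\bar\psi-\lambda x_2$, so that \eqref{tvs1} reads $\bar\omega=g(\bar\phi)$, and I would work with the functional $\mathcal F(\omega):=E(\omega)-\mu I(\omega)$, which by (C1)--(C2) is conserved, is invariant under $x_1$-translations, and whose rearrangement class $\mathcal R_{\bar\omega}$ is preserved by \eqref{vfeq} thanks to (C3).

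For part (i), I would differentiate \eqref{tvs1} in $x_1$. Writing $w:=\partial_{x_1}\bar\psi=\partial_{x_1}\bar\phi$, the equation gives $-\Delta w=g'(\bar\phi)\,w$ in $D$; moreover $w=0$ on $\partial D$ (since $\bar\psi$ is constant on each boundary component, its $x_1$-derivative vanishes there) and $\int_D w\,d\mathbf x=0$ (being an $x_1$-derivative of an $L$-periodic function). Testing against $w$ gives $\int_D|\nabla w|^2\,d\mathbf x=\int_D g'(\bar\phi)w^2\,d\mathbf x\le \Lambda_1\int_D w^2\,d\mathbf x$ by \eqref{gcond2}. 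Since $w$ is admissible for the variational characterization of $\Lambda_1$ attached to \eqref{lep1}, the reverse Poincaré inequality $\int_D|\nabla w|^2\,d\mathbf x\ge\Lambda_1\int_D w^2\,d\mathbf x$ also holds, so equality is forced and $w$ is either $0$ or a first eigenfunction, i.e. $w\in\mathbb E_1$. Integrating in $x_1$ and using that $\mathbb E_1$ is invariant under $\partial_{x_1}$ (so a function with $\partial_{x_1}\bar\psi\in\mathbb E_1$ differs from an element of $\mathbb E_1$ by an $x_2$-function) yields $\bar\psi=\bar\psi^e+\bar\psi^s$ with $\bar\psi^e\in\mathbb E_1$ and $\bar\psi^s=\bar\psi^s(x_2)$. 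In the regime \eqref{hl1} the space $\mathbb E_1$ consists of functions of $x_2$ alone, which forces $w\equiv0$ and a shear profile (this is how Theorem \ref{thm1}(i) is recovered under the strict inequality).

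For part (ii), I would first show $\bar\omega$ maximizes $\mathcal F$ over $\mathcal R_{\bar\omega}$. For $w\in\mathcal R_{\bar\omega}$, writing $v:=w-\bar\omega$ and using the self-adjointness of $\mathcal G$ gives the identity $\mathcal F(w)-\mathcal F(\bar\omega)=\tfrac12\int_D v\mathcal Gv\,d\mathbf x+\int_D v\bar\phi\,d\mathbf x$. Two bounds control the right-hand side. Since $g'\ge0$, the generalized inverse of $g$ is the derivative of a convex function $\Theta$ with $\Theta''\ge1/\Lambda_1$ (because $g'\le\Lambda_1$), and $\int_D[\Theta(w)-\Theta(\bar\omega)]\,d\mathbf x=0$ as $w\in\mathcal R_{\bar\omega}$ (a Casimir); a second-order Taylor estimate together with $\Theta'(\bar\omega)=\bar\phi$ then yields $\int_D v\bar\phi\,d\mathbf x\le-\tfrac1{2\Lambda_1}\|v\|_{L^2}^2$. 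On the other hand, $v$ has zero mean (both $w$ and $\bar\omega$ share the distribution of $\bar\omega$), so the spectral inequality $\int_D v\mathcal Gv\,d\mathbf x\le\tfrac1{\Lambda_1}\|v\|_{L^2}^2$ applies; this is exactly the content of \eqref{lep1}, since $\Lambda_1^{-1}$ is the maximum of $\int_D v\mathcal Gv\,d\mathbf x$ over mean-zero $v$ with $\|v\|_{L^2}=1$, attained precisely on $\mathbb E_1$. Adding the two bounds gives $\mathcal F(w)\le\mathcal F(\bar\omega)$, so $\bar\omega$ is a maximizer; in the strict case \eqref{gcond1} both bounds are strict unless $v=0$, giving uniqueness and hence genuine (not merely orbital) stability in Theorem \ref{thm1}.

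It remains to identify the maximizer set and to run the compactness argument. Equality in the chain forces equality in the spectral inequality, hence $v=w-\bar\omega\in\mathbb E_1$; thus every maximizer has the form $\bar\omega+v$ with $v\in\mathbb E_1$. Here the structure of part (i) enters: $\bar\omega=\Lambda_1\bar\psi^e+\bar\omega^s$ with $\bar\omega^s=-(\bar\psi^s)''$ a shear function, so $\bar\omega+v$ splits as a fixed shear part $\bar\omega^s$ plus an element of $\mathbb E_1$, and since $\bar\omega^s$ is $L^2$-orthogonal to $\mathbb E_1$ (the latter having zero $x_1$-average in \eqref{hl2}), membership $\bar\omega+v\in\mathcal R_{\bar\omega}$ — which fixes $\|\bar\omega+v\|_{L^2}$ — pins down the amplitude of the $\mathbb E_1$-component, leaving only a phase; by the explicit form \eqref{hl2} that phase is an $x_1$-translation, so the set of maximizers is exactly the orbit $\{\bar\omega(\cdot+\alpha\mathbf e_1):\alpha\in\mathbb R\}$. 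Finally I would argue orbital stability by contradiction: given $\omega^n(0)\to\bar\omega$ in $L^p$ and times $t_n$ with $\omega_n:=\omega^n(t_n)$ bounded away from the orbit, conservation gives $\omega_n\in\mathcal R_{\omega^n(0)}$ and $\mathcal F(\omega_n)\to\mathcal F(\bar\omega)$; weak $L^p$ compactness of rearrangement classes and weak continuity of $\mathcal F$ (from compactness of $\mathcal G$) produce a weak limit that is a maximizer, hence a translate of $\bar\omega$, while matching $L^p$ norms and uniform convexity of $L^p$ upgrade weak to strong convergence, contradicting the separation. I expect the genuine obstacle to be the finite-dimensional rigidity step — proving that every maximizer lying in $\bar\omega+\mathbb E_1$ is an honest $x_1$-translate of $\bar\omega$ rather than some other equimeasurable element — which is precisely where the explicit description of $\mathbb E_1$ in each regime \eqref{hl1}--\eqref{hl3} and the equimeasurability constraint must be exploited, and where the borderline equality $\max g'=\Lambda_1$ makes the maximizer set genuinely a nontrivial orbit.
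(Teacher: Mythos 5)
Your part (i) is correct and in fact takes a cleaner route than the paper: you differentiate \eqref{tvs1} in $x_1$ and squeeze $w=\partial_{x_1}\bar\psi$ between the identity $\int_D|\nabla w|^2=\int_Dg'(\bar\psi-\lambda x_2)w^2\le\Lambda_1\int_Dw^2$ and the Poincar\'e-type inequality of Lemma \ref{lm022} (note $w$ is admissible there: it vanishes on $\partial D$ and has zero mean by periodicity), whereas the paper derives the same decomposition from the translation invariance of the maximizer set $\mathcal M_{\bar\omega}$ together with \eqref{inpr}. Your setup for part (ii) — the penalized functional $E-\mu I$, the Fenchel/convexity bound $\int_Dv\bar\phi\,d\mathbf x\le-\tfrac1{2\Lambda_1}\|v\|_{L^2}^2$ from the Casimir $\int_D\Theta(\omega)$, the energy--enstrophy inequality for mean-zero $v$, and the conclusion that every maximizer lies in $\bar\omega+\mathbb E_1$ — is essentially Proposition \ref{keyprop2} in different notation, and the compactness scheme at the end is the paper's Theorem \ref{bsc}.

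The genuine gap is your identification of the maximizer set with the orbit: the claim that ``the set of maximizers is exactly $\{\bar\omega(\cdot+\alpha\mathbf e_1)\}$'' is \emph{false} when $H/L\ge\sqrt3/2$. Your amplitude-plus-phase argument relies on $\bar\omega^s$ being $L^2$-orthogonal to $\mathbb E_1$ and on the ``phase'' being an $x_1$-translation, both of which hold only in the regime \eqref{hl2}; in the regimes \eqref{hl1} and \eqref{hl3} the space $\mathbb E_1$ contains $\sin(2\pi x_2/H)$ and $\cos(2\pi x_2/H)$, which are not orthogonal to the shear part and whose phase shift is an $x_2$-translation, not a symmetry of the channel. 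The paper's Remark after Lemma \ref{lem423} gives a concrete counterexample: for $\bar\omega=\cos(2\pi x_2/H)$ (which satisfies \eqref{gcond2} with $g=\Lambda_1\,{\rm id}$ up to a constant) one has $\mathcal O_{\bar\omega}=\{\bar\omega\}$ but $\mathcal M_{\bar\omega}=\{\bar\omega,-\bar\omega\}$. Consequently your final contradiction argument, which concludes that the weak limit ``is a maximizer, hence a translate of $\bar\omega$,'' does not go through. What is actually needed — and what the paper proves in Lemma \ref{lem423} — is the weaker statement that $\mathcal O_{\bar\omega}$ is an \emph{isolated} subset of $\mathcal M_{\bar\omega}$, obtained by exploiting the higher moment identities $\int_D\bar\omega^m\,d\mathbf x=\int_Dv^m\,d\mathbf x$ for $m=2,3$ (not just the $L^2$ norm) to show that the admissible coefficient vectors in $\mathbb E_1$ solve a cubic system with finitely many solutions. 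One then needs the isolated-subset version of the stability criterion (Theorem \ref{bsc}(ii)), which in turn requires continuity of $t\mapsto\omega(t,\cdot)$ and a connectedness argument to prevent the trajectory from jumping to another component of $\mathcal M_{\bar\omega}$; your plain compactness argument, which only uses the full maximizer set, cannot close this step.
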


We give some remarks about Theorems \ref{thm1} and \ref{thm2}.

\begin{remark}
Without loss of generality, we assume that the stream functions mentioned in this paper vanish on $\{x_2=0\}$.
Then the stability conclusions in Theorems \ref{thm1} and \ref{thm2} can also be expressed in terms of $W^{2,p}$ norm of the stream function. For example, the stability conclusion in Theorem  \ref{thm1} can be stated as: \emph{for any $\varepsilon>0,$ there exists some $\delta>0,$ such that for any sufficiently smooth Euler flow with stream function $\psi(t,\mathbf x)$,  
\[
   \|\psi(0,\cdot)-\bar\psi\|_{W^{2,p}(D)} <\delta \quad\Longrightarrow\quad \|\psi(t,\cdot)-\bar\psi\|_{W^{2,p}(D)} <\varepsilon\quad \forall\,t\in\mathbb R.
\]}

\end{remark}

\begin{remark}
  Denote by $\mathcal O_{\bar\omega}$ the orbit of $\bar\omega$ under the action of the translation group in the $x_1$-direction, i.e.,
\begin{equation}\label{defoo}
\mathcal O_{\bar\omega}:=\left\{v:D\to \mathbb R\mid v(x_1,x_2)=\bar\omega(x_1+\alpha,x_2),\, \alpha\in\mathbb R\right\}.
\end{equation}
Then the conclusion of Theorem \ref{thm2}(ii) can also be stated as follows:  \emph{for any $\varepsilon>0,$ there exists some $\delta>0,$ such that for any sufficiently smooth Euler flow with vorticity $\omega(t,\mathbf x)$,  
      \begin{equation}\label{stabless2}
      \|\omega(0,\cdot)-\bar\omega\|_{L^p(D)} <\delta \quad\Longrightarrow\quad \min_{v\in\mathcal O_{\bar\omega}} \|\omega(t,\cdot)- v \|_{L^p(D)}<\varepsilon\quad\forall\,t\in\mathbb R.
      \end{equation}
      }
\end{remark}

\begin{remark}
For non-shear steady/traveling flows, stability can only be expected up to translations in the $x_1$-direction. This follows directly from Lemma \ref{lm001}.
Therefore, the stability conclusion in Theorem \ref{thm2} is sharp.
\end{remark}

\begin{remark}
  The smoothness assumption on the perturbed flow can be relaxed. In fact, it suffices to require that the vorticity $\omega$ of the perturbed flow is a continuous admissible map. See Definition \ref{defadmmap} in Section \ref{sec3}.
\end{remark}

Based on Theorem \ref{thm2}, we have the following 
two straightforward corollaries.

 \begin{corollary}
Consider the shear flow with stream function 
\[\bar\psi(x_1,x_2)=a\sin(mx_2+\alpha)+bx_2+c, \quad a,b,c,m,\alpha\in\mathbb R.
\]
  Then the shear flow is stable as in \eqref{stabless1} if $m^2\leq \Lambda_1$.
\end{corollary}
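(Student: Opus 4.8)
The plan is to realize the given shear flow as a special case of the traveling-wave ansatz \eqref{tvs1} and then invoke Theorem \ref{thm2}. First I would compute the vorticity directly: since $\bar\psi$ depends only on $x_2$, one has $\bar\omega=-\Delta\bar\psi=-\partial_{x_2}^2\bar\psi=am^2\sin(mx_2+\alpha)$. The key algebraic observation is that the linear term $bx_2$ can be absorbed by choosing the wave speed $\lambda=b$; indeed, with this choice $\bar\psi-\lambda x_2=a\sin(mx_2+\alpha)+c$, so that
\[
\bar\omega=am^2\sin(mx_2+\alpha)=m^2\bigl(\bar\psi-bx_2\bigr)-m^2c.
\]
Hence $\bar\psi$ solves \eqref{tvs1} with $\lambda=b$ and the affine profile $g(s)=m^2(s-c)$, which is clearly $C^1$ and satisfies $g'\equiv m^2$. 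The boundary condition in \eqref{tvs1} holds trivially because $\bar\psi$ is constant on each component of $\partial D$.

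Next I would check the hypothesis \eqref{gcond2} of Theorem \ref{thm2}. Since $g'(\bar\psi-\lambda x_2)\equiv m^2$ is constant, one has $\min_{\bar D}g'=\max_{\bar D}g'=m^2$, and the chain $0\leq m^2\leq\Lambda_1$ is exactly the standing assumption $m^2\leq\Lambda_1$ together with the trivial bound $m^2\geq 0$. Thus Theorem \ref{thm2} applies and yields orbital stability in the sense of \eqref{stabless2}: for each $\varepsilon>0$ there is a $\delta>0$ so that an initial perturbation smaller than $\delta$ in $L^p$ keeps $\min_{v\in\mathcal O_{\bar\omega}}\|\omega(t,\cdot)-v\|_{L^p(D)}<\varepsilon$ for all time. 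Note that this covers the borderline case $m^2=\Lambda_1$, which is precisely the reason for invoking Theorem \ref{thm2} rather than Theorem \ref{thm1}; for the strict range $m^2<\Lambda_1$ one could instead apply Theorem \ref{thm1} and obtain \eqref{stabless1} immediately.

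The final step is to upgrade the orbital statement to the genuine stability \eqref{stabless1}. The point is that $\bar\omega=am^2\sin(mx_2+\alpha)$ depends only on $x_2$, hence is invariant under every translation in the $x_1$-direction; consequently the orbit $\mathcal O_{\bar\omega}$ defined in \eqref{defoo} collapses to the single point $\{\bar\omega\}$, and the infimum over $v\in\mathcal O_{\bar\omega}$ reduces to $\|\omega(t,\cdot)-\bar\omega\|_{L^p(D)}$. Substituting this identity into the orbital estimate produces \eqref{stabless1} verbatim. There is no serious obstacle in this argument—it is a direct specialization—so the only points requiring care are the choice $\lambda=b$ that removes the linear term from the profile equation and the observation that the translation orbit of a shear vorticity is trivial, which is exactly what converts the orbital conclusion of Theorem \ref{thm2} into the non-orbital stability claimed in the corollary.
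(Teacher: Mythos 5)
Your proposal is correct and follows exactly the route the paper intends: the choice $\lambda=b$ with the affine profile $g(s)=m^2(s-c)$ realizes the shear flow as a solution of \eqref{tvs1} satisfying \eqref{gcond2}, Theorem \ref{thm2} gives orbital stability (covering the borderline case $m^2=\Lambda_1$), and the observation that $\mathcal O_{\bar\omega}=\{\bar\omega\}$ for a shear vorticity converts this into \eqref{stabless1}. The paper states this as a ``straightforward corollary'' of Theorem \ref{thm2} without further proof, and your argument supplies precisely the omitted verification.
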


 \begin{corollary}
Consider the steady/traveling flow with stream function 
\[\bar\psi(x_1,x_2)=a 
\cos\left(\frac{2\pi x_1}{L}\right)\sin\left(\frac{\pi x_2}{H}\right)+b
\sin\left(\frac{2\pi x_1}{L}\right)\sin\left(\frac{\pi x_2}{H}\right)+cx_2+d,\quad a,b,c,d \in\mathbb R.\]
If  ${H}/{L}\leq {\sqrt{3}}/{2},$
  then the flow is orbitally stable as in \eqref{stabless2}.
\end{corollary}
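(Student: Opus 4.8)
The plan is to deduce this corollary directly from Theorem \ref{thm2}, so the whole task reduces to verifying that the given $\bar\psi$ satisfies the semilinear equation \eqref{tvs1} together with the derivative bound \eqref{gcond2}. First I would split $\bar\psi=\bar\psi^e+\bar\psi^s$, where
\[
\bar\psi^e:=a\cos\left(\frac{2\pi x_1}{L}\right)\sin\left(\frac{\pi x_2}{H}\right)+b\sin\left(\frac{2\pi x_1}{L}\right)\sin\left(\frac{\pi x_2}{H}\right),\qquad \bar\psi^s:=cx_2+d.
\]
A direct computation shows that both basis functions appearing in $\bar\psi^e$ are Dirichlet eigenfunctions of $-\Delta$ for the single eigenvalue $\mu:=\frac{4\pi^2}{L^2}+\frac{\pi^2}{H^2}$, while the affine part $\bar\psi^s$ lies in the kernel of $-\Delta$. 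Hence $-\Delta\bar\psi=\mu\,\bar\psi^e=\mu(\bar\psi-cx_2-d)$. Choosing $\lambda:=c$ and $g(s):=\mu(s-d)$, which is affine and therefore in $C^1(\mathbb R)$, gives $g(\bar\psi-\lambda x_2)=\mu(\bar\psi-cx_2-d)=-\Delta\bar\psi$, so \eqref{tvs1} holds. The boundary condition in \eqref{tvs1} is immediate as well: since $\sin(\pi x_2/H)$ vanishes at $x_2=0$ and $x_2=H$, one has $\bar\psi^e=0$ there, so $\bar\psi\equiv d$ on $\{x_2=0\}$ and $\bar\psi\equiv cH+d$ on $\{x_2=H\}$.

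With this choice $g'\equiv\mu$ is constant, so the double inequality \eqref{gcond2} collapses to the two requirements $0\leq\mu$ and $\mu\leq\Lambda_1$. The first is clear since $\mu>0$. The second is precisely where the hypothesis $H/L\leq\sqrt{3}/2$ enters: I would invoke Proposition \ref{propevp}, equivalently the case analysis \eqref{hl2}--\eqref{hl3}, which shows that exactly when $H/L\leq\sqrt{3}/2$ the first eigenvalue satisfies $\Lambda_1=\frac{4\pi^2}{L^2}+\frac{\pi^2}{H^2}=\mu$. Indeed, comparing $\mu$ with $4\pi^2/H^2$ reduces, after clearing denominators, to comparing $H/L$ with $\sqrt{3}/2$. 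Thus $\mu=\Lambda_1$, the bound \eqref{gcond2} holds with equality at the upper endpoint, and $\bar\psi^e\in\mathbb E_1$ by \eqref{hl2}--\eqref{hl3}, which is consistent with the decomposition asserted in Theorem \ref{thm2}(i).

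Having checked all the hypotheses, Theorem \ref{thm2}(ii) applies verbatim and yields orbital stability in the sense of \eqref{stabless2}, completing the deduction. There is essentially no analytic obstacle in this corollary, since it is a clean specialization of Theorem \ref{thm2}; the only point requiring genuine care is the eigenvalue identification $\mu=\Lambda_1$, that is, verifying that for $H/L\leq\sqrt{3}/2$ the relevant non-shear mode sits exactly at the bottom of the spectrum rather than strictly above it. For the borderline ratio $H/L=\sqrt{3}/2$ the first eigenspace \eqref{hl3} is four-dimensional and contains the shear modes $\sin(2\pi x_2/H),\cos(2\pi x_2/H)$ alongside the two non-shear modes, but this degeneracy creates no difficulty, as $g'$ remains the constant $\Lambda_1$ throughout and the hypotheses of Theorem \ref{thm2} are unaffected.
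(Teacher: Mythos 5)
Your proposal is correct and is exactly the verification the paper leaves implicit when it calls this a ``straightforward corollary'' of Theorem \ref{thm2}: one checks \eqref{tvs1} with $\lambda=c$ and $g(s)=\mu(s-d)$, $\mu=\frac{4\pi^2}{L^2}+\frac{\pi^2}{H^2}$, and then \eqref{gcond2} reduces via Proposition \ref{propevp} to $\mu=\Lambda_1$ precisely when $H/L\leq\sqrt{3}/2$. No gaps.
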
 
 
 In particular, we have confirmed the existence of orbitally stable \emph{non-shear} flows provided that $H/L\leq \sqrt{3}/2.$

 \subsection{Outline of the proofs}
 To prove Theorems \ref{thm1} and \ref{thm2}, we use a variational approach combined with a compactness argument, which is mainly inspired by \cite{BAR,WZCV,CWZ,Wdisk}. More specifically, we consider the following maximization problem: 
\begin{equation}\label{mmp}
M_{\bar\omega}=\sup_{v\in\mathcal R_{\bar\omega}\cap\mathcal I_{\bar\omega}} E(v),
\end{equation}
where   $\bar\omega$ is as given in Theorem \ref{thm1} or \ref{thm2}, $E$ is defined by \eqref{defoe}, and $\mathcal I_{\bar\omega}$ and $\mathcal R_{\bar\omega}$ are defined by \eqref{defoi} and \eqref{deforc}, respectively. This maximization problem is very natural since the functional $E$ and the constrained set $\mathcal R_{\bar\omega}\cap\mathcal I_{\bar\omega}$  are both invariant under the Euler dynamics. Denote by $\mathcal M_{\bar\omega}$ the set of maximizers of \eqref{mmp}. By employing a compactness argument and utilizing the conservation laws (C1)-(C3) of the Euler equation, we can prove that $\mathcal M_{\bar\omega}$, as well as any of its  \emph{isolated} subsets, is stable under the Euler dynamics. We then study the structure of the set $\mathcal{M}_{\bar\omega}$ and find that: \begin{itemize} 
\item [(1)] $\mathcal{M}_{\bar\omega} = \{\bar\omega\}$ under the conditions of Theorem \ref{thm1}, and
\item [(2)] $\mathcal{O}_{\bar\omega}$ is isolated in $\mathcal{M}_{\bar\omega}$ under the conditions of Theorem \ref{thm2}. 
    \end{itemize}
\noindent  The desired stability then follows immediately. The rigidity results in Theorems \ref{thm1} and \ref{thm2} are derived from the structure of  $\mathcal M_{\bar\omega}$ and the  translational symmetry of the domain.
 
Compared to Arnold's approach, we make better use of the conservation laws of the Euler equation, particularly the isovortical property (C3). The geometric and functional properties of  rearrangement class play an essential role in the analysis.

  \subsection{Organization of this paper}
  
The rest of this paper is organized as follows. In Section \ref{sec2}, we investigate the eigenvalue problem \eqref{lep1}, with a particular focus on the first eigenvalue and first eigenspace. In Section \ref{sec3}, we study the maximization problem \eqref{mmp} and  establish a Burton-type stability criterion for later use. Sections \ref{sec4} and \ref{sec5} are  devoted to the proofs of Theorems \ref{thm1} and \ref{thm2}, respectively.

 \section{Laplace eigenvalue problem}\label{sec2}

In this section, we study the Laplace eigenvalue problem \eqref{lep1}. Define
\begin{equation}\label{defott}
\mathcal Tv:=\mathcal Gv-\mathsf I_{\mathcal Gv}.
\end{equation}
Here and hereafter, we use $\mathsf I_w$ to denote the integral mean of any $w\in L^1(D)$, i.e.,
\[\mathsf I_w=\frac{1}{LH}\int_D wd\mathbf x.\]
Then, for any $\Lambda\in\mathbb R,$ \eqref{lep1} is equivalent to the following operator equation:
\begin{equation}\label{opeq}
v=\Lambda \mathcal Tv,\quad v\in \mathring L^2(D):=\left\{v\in L^2(D)\mid \mathsf I_v=0\right\}.
\end{equation}
In fact,  if $u$ solves \eqref{lep1}, then $v:=-\Delta u$ satisfies \eqref{opeq}. Conversely, if $v$ satisfies \eqref{opeq}, then $u:=\mathcal Tv$ solves \eqref{lep1}.
 By standard elliptic estimates and the Sobolev embedding theorem,
  $\mathcal T$ is a compact operator mapping $\mathring L^2(D)$ into itself.  Moreover, by integration by parts,
  \[\int_Dv_1\mathcal T v_2 d\mathbf x=\int_D(\nabla \mathcal T v_1)\cdot (\nabla \mathcal T v_2) d\mathbf \quad\forall\,v_1,v_2\in\mathring L^2(D),\]
  which implies that $\mathcal T$ is symmetric and  positive definite.  
 Applying the Hilbert-Schmidt theorem, we have the following. 

\begin{lemma}\label{lm021}
\begin{itemize}
  \item [(i)] The eigenvalues of \eqref{lep1} are all real and positive, and can be arranged as  
\[0<\Lambda_1<\Lambda_2<\cdot\cdot\cdot,\quad \lim_{k\to\infty}\Lambda_k=\infty.\]
  \item [(ii)] The eigenspace $\mathbb E_k$ associated with  every $\Lambda_k$ is finite-dimensional.
\item[(iii)]There exists an orthogonal basis $\{w_{k,i}\},$ $k=1,2,\cdot\cdot\cdot,$ $1\leq i\leq d_k$, such that 
    \[\mathbb E_k={\rm span}\left\{w_{k,1},\cdot\cdot\cdot,w_{k,d_k}\right\},\]
    where $d_k$ is the dimension of $\mathbb E_k$.
\end{itemize}
\end{lemma}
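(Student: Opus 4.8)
The plan is to reduce everything to the spectral theorem for compact, self-adjoint, positive operators on a Hilbert space, since the discussion preceding the statement has already assembled all the necessary ingredients. The key observation is that, after passing to the reciprocal quantity $\mu:=1/\Lambda$, the operator equation \eqref{opeq} reads $\mathcal T v=\mu v$; hence the eigenvalues of \eqref{lep1} are precisely the reciprocals of the nonzero eigenvalues of $\mathcal T$ acting on the Hilbert space $\mathring L^2(D)$, and the corresponding eigenspaces coincide. Everything in the lemma should therefore be read off from the spectral decomposition of $\mathcal T$.

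First I would record the three structural facts about $\mathcal T$ proved just above the statement: as an operator on $\mathring L^2(D)$, it is compact, symmetric, and positive definite. Symmetry makes $\mathcal T$ self-adjoint, so all its eigenvalues are real; positive definiteness, $\int_D v\,\mathcal T v\,d\mathbf x>0$ for every $v\neq 0$, shows that each eigenvalue of $\mathcal T$ is \emph{strictly} positive and, in particular, that $0$ is not an eigenvalue, i.e. $\mathcal T$ is injective. Translating back through $\Lambda=1/\mu$ gives the positivity asserted in part (i).

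Next I would invoke the Hilbert-Schmidt theorem for the compact self-adjoint operator $\mathcal T$: there is an orthonormal basis of $\mathring L^2(D)$ consisting of eigenvectors of $\mathcal T$, every nonzero eigenvalue has finite multiplicity, and the nonzero eigenvalues accumulate only at $0$. Since $\mathcal T$ is injective and $\mathring L^2(D)$ is infinite-dimensional, the collection of distinct eigenvalues $\{\mu_k\}$ must be infinite, and may be listed in strictly decreasing order $\mu_1>\mu_2>\cdots\to 0$. Setting $\Lambda_k:=1/\mu_k$ then yields the strictly increasing arrangement $0<\Lambda_1<\Lambda_2<\cdots$ with $\Lambda_k\to\infty$, which is part (i); finite multiplicity of each $\mu_k$ is exactly the finite-dimensionality of $\mathbb E_k$ in part (ii). Finally, grouping the eigenvectors furnished by the Hilbert-Schmidt theorem according to their eigenvalue, and applying Gram-Schmidt within each finite-dimensional eigenspace if necessary, produces the orthogonal basis $\{w_{k,i}\}$ of part (iii); orthogonality across distinct $k$ is automatic, since eigenvectors of a self-adjoint operator belonging to distinct eigenvalues are orthogonal.

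The argument is essentially bookkeeping once the spectral theorem is in hand, so I do not anticipate a genuine analytic obstacle. The only points requiring a moment's care are ruling out a finite spectrum and excluding $0$ as an eigenvalue; both follow from the positive definiteness and injectivity of $\mathcal T$ together with $\dim\mathring L^2(D)=\infty$, all of which the preceding discussion has already supplied.
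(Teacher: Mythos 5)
Your proposal is correct and follows essentially the same route as the paper, which simply notes that $\mathcal T$ is compact, symmetric, and positive definite on $\mathring L^2(D)$ and then invokes the Hilbert--Schmidt theorem. You merely spell out the bookkeeping (the correspondence $\Lambda=1/\mu$, injectivity of $\mathcal T$, and the infinitude of the spectrum) that the paper leaves implicit.
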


Concerning $\Lambda_1$ and $\mathbb E_1,$ we have the following. 
 \begin{lemma}\label{lm022}
  For any $u\in H^1(D)$ such that $\mathsf I_u=0$ and  $u|_{x_2=0}=u|_{x_2=H}={\rm constant},$ it holds that 
 \[\int_D|\nabla u|^2 d\mathbf x\geq \Lambda_1\int_D u^2d\mathbf x,\]
 and the equality is attained if and only if $u\in\mathbb E_1.$ As a consequence, 
   \begin{equation}\label{vtv}
   \int_Dv\mathcal Tv d\mathbf x\geq \Lambda_1\int_D(\mathcal Tv)^2d\mathbf x\quad\forall\,v\in\mathring L^2(D),
   \end{equation}
   and the equality is attained if and only if $v\in\mathbb E_1.$
 \end{lemma}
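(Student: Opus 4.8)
The plan is to establish the stated $H^1$ Poincaré inequality first, by a direct variational characterization of $\Lambda_1$, and then to obtain \eqref{vtv} as an immediate corollary by inserting $u=\mathcal Tv$. The one subtlety to fix at the outset is the meaning of the boundary condition: ``$u|_{x_2=0}=u|_{x_2=H}=\text{constant}$'' must be read as requiring a \emph{single common} constant on all of $\partial D$ (not merely constant on each component). This is exactly what $\mathcal T$ produces, since $u=\mathcal Tv=\mathcal Gv-\mathsf I_{\mathcal Gv}$ equals $-\mathsf I_{\mathcal Gv}$ on the whole of $\partial D$. I would therefore work in the admissible class
\[
\mathcal A:=\left\{u\in H^1(D)\mid \mathsf I_u=0,\ u|_{\partial D}\ \text{is a single constant}\right\},
\]
which is a closed linear subspace of $H^1(D)$, and set
\[
\mu:=\inf\left\{\int_D|\nabla u|^2\,d\mathbf x\ \Big|\ u\in\mathcal A,\ \int_D u^2\,d\mathbf x=1\right\},
\]
aiming to show $\mu=\Lambda_1$ and that the infimum is attained precisely on $\mathbb E_1$.

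For existence of a minimizer I would run the direct method. A minimizing sequence is bounded in $H^1(D)$, so by Rellich's theorem it converges, along a subsequence, weakly in $H^1$ and strongly in $L^2$; the normalization $\int u^2=1$ and the zero-mean condition pass to the limit, the trace constraint defining $\mathcal A$ is preserved because $\mathcal A$ is weakly closed, and weak lower semicontinuity of the Dirichlet energy makes the limit $u_*$ a minimizer. (Here $\mu>0$, for $\mu=0$ would force $u_*$ constant and hence, by $\mathsf I_{u_*}=0$, identically zero.) The crux is to show $u_*$ solves \eqref{lep1} with $\Lambda=\mu$. Writing $\mathcal B$ for the subspace obtained from $\mathcal A$ by dropping the mean constraint, the Euler--Lagrange relation reads
\[
\int_D\nabla u_*\cdot\nabla\phi\,d\mathbf x=\mu\int_D u_*\phi\,d\mathbf x+\frac{\nu}{2}\int_D\phi\,d\mathbf x\qquad\forall\,\phi\in\mathcal B,
\]
where $\nu$ is the multiplier of the mean constraint. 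Testing against $\phi\equiv1$ forces $\nu=0$; testing against $\phi\in C_c^\infty(D)$ then yields $-\Delta u_*=\mu u_*$ weakly, so $u_*$ is smooth in $D$ by elliptic regularity; and integrating by parts against a general $\phi\in\mathcal B$ gives the natural boundary condition $\int_{\partial D}\partial_n u_*\,dS=0$, using precisely that $\phi$ takes a single constant value on all of $\partial D$. Since $u_*\in\mathcal B$ is constant on $\partial D$ and has zero mean, $u_*$ solves \eqref{lep1}, so $\mu=\Lambda_j$ for some $j$. Comparing with the test functions $w\in\mathbb E_1\subset\mathcal A$, which satisfy $\int_D|\nabla w|^2\,d\mathbf x=\Lambda_1\int_D w^2\,d\mathbf x$ (integrate $-\Delta w=\Lambda_1 w$ by parts and note the boundary term vanishes because $\int_{\partial D}\partial_n w\,dS=-\int_D\Delta w\,d\mathbf x=\Lambda_1\int_D w\,d\mathbf x=0$), gives $\mu\le\Lambda_1$ and hence $\mu=\Lambda_1$. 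This yields the inequality on $\mathcal A$; equality for $u\neq0$ forces $u/\|u\|_{L^2(D)}$ to be a minimizer, hence $u\in\mathbb E_1$, and the converse is the same integration-by-parts computation.

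Finally I would deduce \eqref{vtv} by applying the inequality to $u=\mathcal Tv$ for $v\in\mathring L^2(D)$. As noted, $\mathsf I_{\mathcal Tv}=0$ and $\mathcal Tv\equiv-\mathsf I_{\mathcal Gv}$ on $\partial D$, so $\mathcal Tv\in\mathcal A$; the integration-by-parts identity recorded before Lemma \ref{lm021} gives $\int_D v\,\mathcal Tv\,d\mathbf x=\int_D|\nabla\mathcal Tv|^2\,d\mathbf x$, so the inequality on $\mathcal A$ applied to $\mathcal Tv$ reads exactly as \eqref{vtv}. For the equality case I would verify $\mathcal Tv\in\mathbb E_1\iff v\in\mathbb E_1$: if $v\in\mathbb E_1$ then $-\Delta v=\Lambda_1 v$ forces $\mathcal Tv=v/\Lambda_1$, whereas conversely $-\Delta(\mathcal Tv)=v$ shows that $\mathcal Tv\in\mathbb E_1$ implies $v=\Lambda_1\mathcal Tv\in\mathbb E_1$.

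I expect the main obstacle to be the Euler--Lagrange step rather than the compactness: one must confirm that the minimizer genuinely solves \eqref{lep1} and not some perturbed problem, which requires both the vanishing of the multiplier $\nu$ and the emergence of the correct natural boundary condition. This is exactly where the single-common-constant boundary condition is essential. Were different constants allowed on $\{x_2=0\}$ and $\{x_2=H\}$, the admissible class would enlarge, the linear profile $u=x_2-H/2$ would be admissible with Rayleigh quotient $12/H^2$, and for $H/L>\sqrt3/2$ this is strictly below $\Lambda_1=4\pi^2/H^2$, so the inequality would fail; the structure of $\mathcal A$ must therefore be used in an essential way.
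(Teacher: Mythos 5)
Your proof is correct, but it takes a genuinely different route from the paper's. The paper proves Lemma \ref{lm022} by eigenfunction expansion, following the proof of Theorem 2(i) in Section 6.5 of Evans: one expands $u$ in the orthogonal eigenbasis $\{w_{k,i}\}$ of Lemma \ref{lm021}(iii) and bounds the Dirichlet energy term by term using $\Lambda_k\geq\Lambda_1$, with the equality case falling out of the same computation. You instead minimize the Rayleigh quotient over the admissible class $\mathcal A$ by the direct method and show via the Euler--Lagrange equation (multiplier for the mean constraint vanishing, interior equation by testing against $C_c^\infty(D)$, natural boundary condition $\int_{\partial D}\partial_n u_*\,dS=0$ from the single-constant trace of the test functions) that any minimizer solves \eqref{lep1}, so the infimum is an eigenvalue, which must be $\Lambda_1$ after comparison with test functions from $\mathbb E_1$. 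Each approach buys something: the expansion argument is shorter but tacitly requires that the eigenfunctions are complete not only in $\mathring L^2(D)$ but in the energy space $\mathcal A$ with the Dirichlet inner product (an identification of the form domain), whereas your argument needs only that $\Lambda_1$ is the smallest eigenvalue, at the cost of the regularity and natural-boundary-condition bookkeeping. Your preliminary clarification is also worth keeping: the boundary condition must be read as a \emph{single common} constant on both components of $\partial D$ --- which is exactly what $\mathcal Tv=-\mathsf I_{\mathcal Gv}$ on $\partial D$ provides and what the equivalence of \eqref{lep1} with \eqref{opeq} presupposes --- since with independent constants the profile $x_2-H/2$ is admissible with Rayleigh quotient $12/H^2<4\pi^2/H^2=\Lambda_1$ for $H/L>\sqrt3/2$ and the inequality would fail. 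Your deduction of \eqref{vtv} via $u=\mathcal Tv$ and the equivalence $\mathcal Tv\in\mathbb E_1\Leftrightarrow v\in\mathbb E_1$ is exactly the intended consequence.
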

 \begin{proof}
   The proof is based on the method of eigenfunction expansion, which is almost identical to the proof of Theorem 2(i) in \cite[Section 6.5]{LCE}.  
 \end{proof}

\begin{remark}
We note that \eqref{vtv} implies the following \emph{energy-enstrophy inequality}:
\[
\int_Dv\mathcal Tv d\mathbf x\leq \frac{1}{\Lambda_1}\int_D v^2 d\mathbf x\quad\forall\,v\in\mathring L^2(D),
\] 
with the inequality being an equality if only if  $v\in\mathbb E_1.$ In fact, for any $v\in\mathring L^2(D)$,
\begin{align*}
     \frac{1}{\Lambda_1}\int_D v^2 d\mathbf x  &\geq  2\int_Dv\mathcal Tv d\mathbf x-\Lambda_1\int_D(\mathcal Tv)^2 d\mathbf x \\
     &\geq  2\Lambda_1\int_D(\mathcal Tv)^2 d\mathbf x-\Lambda_1\int_D(\mathcal Tv)^2 d\mathbf x\\
     &=  \Lambda_1\int_D(\mathcal Tv)^2 d\mathbf x ,
   \end{align*}
   where in the first inequality we have used
   \[ \frac{1}{\Lambda_1}a^2+\Lambda_1 b^2\geq 2ab\quad\forall\,a,b\in\mathbb R,\]
    and in the second inequality we have used \eqref{vtv}. 
\end{remark}

\begin{proposition}\label{propevp}
  The set of eigenvalues of \eqref{lep1} is
\[  \left\{\left(\frac{2k\pi}{H}\right)^2\mid k=1,2,3,\cdot\cdot\cdot\right\}\bigcup\left\{ \left(\frac{2n\pi}{L}\right)^2
+\left(\frac{k\pi}{H}\right)^2\mid n,k=1,2,3,\cdot\cdot\cdot\right\}.\]
 The eigenspace $\mathsf E_k$ associated with $\left(\frac{2k\pi}{H}\right)^2$ is given by 
\[ \mathsf E_k:= {\rm span}\left\{\sin \left(\frac{2k\pi x_2}{H}\right),\,\cos\left(\frac{2k\pi x_2}{H}\right)\right\}, \]
and the eigenspace $\mathsf E_{n,k}$ associated with $\left(\frac{2n\pi}{L}\right)^2
+\left(\frac{k\pi}{H}\right)^2$ is given by 
\[\mathsf  E_{n,k}= {\rm span}\left\{\cos\left(\frac{2n\pi x_1}{L}\right)\sin\left(\frac{k\pi x_2}{H}\right),\,
\sin\left(\frac{2n\pi x_1}{L}\right)\sin\left(\frac{k\pi x_2}{H}\right)\right\}.\]
As a corollary, $\Lambda_1$ and $\mathbb E_1$ are determined by $H/L$ as given in \eqref{hl1}-\eqref{hl3}.
\end{proposition}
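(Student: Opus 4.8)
The plan is to diagonalize the problem by separation of variables, exploiting the product structure $D=\mathbb T_L\times(0,H)$. First I would expand a candidate eigenfunction $u$ in a real Fourier series in the periodic variable $x_1$,
\[
u(x_1,x_2)=a_0(x_2)+\sum_{n\ge1}\Big(a_n(x_2)\cos\tfrac{2\pi n x_1}{L}+b_n(x_2)\sin\tfrac{2\pi n x_1}{L}\Big),
\]
and substitute into $-\Delta u=\Lambda u$. Since the $x_1$-modes are orthogonal, each coefficient $c_n$ (standing for $a_n$ or $b_n$) satisfies the decoupled ODE
\[
-c_n''(x_2)+\Big(\tfrac{2\pi n}{L}\Big)^2 c_n(x_2)=\Lambda\,c_n(x_2),\qquad x_2\in(0,H).
\]
The two scalar side conditions in \eqref{lep1} must then be read off mode by mode. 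Because the boundary trace is required to be constant in $x_1$, every mode with $n\ge1$ inherits the Dirichlet conditions $c_n(0)=c_n(H)=0$, whereas the zero mode $a_0$ inherits only the periodic-type condition $a_0(0)=a_0(H)$ together with the zero-mean constraint $\int_0^H a_0=0$ coming from $\int_D u\,d\mathbf x=0$.

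For each $n\ge1$ the problem is the classical Dirichlet eigenvalue problem on $(0,H)$: nontrivial solutions exist precisely when $\Lambda-(2\pi n/L)^2=(k\pi/H)^2$ for some integer $k\ge1$, with eigenfunction $\sin(k\pi x_2/H)$. Pairing these with $\cos(2\pi n x_1/L)$ and $\sin(2\pi n x_1/L)$ produces the eigenvalue $(2n\pi/L)^2+(k\pi/H)^2$ and the two-dimensional eigenspace $\mathsf E_{n,k}$.

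The delicate case is the zero mode, and I expect it to be the main obstacle. Writing $\Lambda=\nu^2$ (legitimate since $\mathcal T$ is positive definite, so $\Lambda>0$ by Lemma \ref{lm021}) and $a_0=A\cos\nu x_2+B\sin\nu x_2$, I would impose $a_0(0)=a_0(H)$ and $\int_0^H a_0=0$ to obtain a homogeneous $2\times2$ linear system for $(A,B)$ whose determinant is a nonzero multiple of $1-\cos(\nu H)$. Nontrivial solutions thus force $\nu H=2k\pi$, i.e. $\Lambda=(2k\pi/H)^2$, and at these values \emph{both} $A$ and $B$ remain free, giving the two-dimensional eigenspace $\mathsf E_k=\mathrm{span}\{\cos(2k\pi x_2/H),\sin(2k\pi x_2/H)\}$. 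The subtlety is precisely that the boundary value is allowed to be a nonzero constant: the cosine eigenfunctions, which do not vanish on $\partial D$, are easily missed if one carelessly replaces the condition by homogeneous Dirichlet data.

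Finally, to rule out omitted eigenvalues, I would observe that the functions produced are mutually orthogonal and, mode by mode, complete: the period-$H$ trigonometric system $\{\cos(2k\pi x_2/H),\sin(2k\pi x_2/H)\}_{k\ge1}$ spans the mean-zero subspace of $L^2(0,H)$, while the Dirichlet sine system $\{\sin(k\pi x_2/H)\}_{k\ge1}$ spans all of $L^2(0,H)$. Hence the whole collection is a complete orthogonal basis of $\mathring L^2(D)$, and by Lemma \ref{lm021} it must exhaust the spectrum. The corollary \eqref{hl1}–\eqref{hl3} then reduces to comparing the two smallest candidates $(2\pi/H)^2$ and $(2\pi/L)^2+(\pi/H)^2$; subtracting $(\pi/H)^2$ turns the comparison into $3/H^2$ versus $4/L^2$, equivalently $H/L$ versus $\sqrt3/2$, which distributes the three cases exactly as claimed.
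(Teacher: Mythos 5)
Your argument is correct, and it reaches the conclusion by a genuinely different route from the paper. The paper takes the candidate eigenfunctions as given, checks trivially that they solve \eqref{lep1}, and then spends the entire proof showing that the orthogonal family $\{\mathsf E_k\}\cup\{\mathsf E_{n,k}\}$ is \emph{complete} in $\mathring L^2(D)$ (a three-step moment argument, using an odd extension in $x_2$ to invoke completeness of the sine system), after which Lemma \ref{lm021}(iii) guarantees nothing in the spectrum has been missed. You instead \emph{derive} the spectrum by separation of variables: the $x_1$-Fourier decomposition reduces the problem to decoupled ODEs on $(0,H)$, the constancy of the boundary trace correctly forces Dirichlet data on every mode $n\ge 1$ and the nonstandard pair $a_0(0)=a_0(H)$, $\int_0^H a_0=0$ on the zero mode, and your $2\times 2$ determinant $2\bigl(1-\cos(\nu H)\bigr)$ computation is exactly what produces the cosine eigenfunctions with nonzero boundary values — the point a naive Dirichlet reduction would miss. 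Note that your ODE analysis already establishes exhaustion on its own (an eigenfunction for an unlisted $\Lambda$ would have all Fourier modes zero), so the closing completeness check is strictly speaking redundant, though it is essentially the content of the paper's proof and does no harm. What your route buys is a constructive, self-contained derivation explaining where each eigenfunction comes from; what the paper's buys is avoiding any regularity discussion for the mode functions at the cost of a more delicate completeness argument. One small point worth making explicit in a write-up: when $H/L=\sqrt3/2$ the two families contribute the \emph{same} eigenvalue, so the full eigenspace is the direct sum $\mathsf E_1\oplus\mathsf E_{1,1}$, which is exactly what \eqref{hl3} records.
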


\begin{proof}
It is clear that every function  in  $\{\mathsf E_k\}_{k=1}^\infty \cup\{\mathsf E_{n,k}\}_{n,k=1}^\infty$ is an eigenfunction of \eqref{lep1}. In view of Lemma \ref{lm021}(iii), it suffices to show that the orthogonal set   $\{\mathsf E_k\}_{k=1}^\infty \cup\{\mathsf E_{n,k}\}_{n,k=1}^\infty$ is complete in $\mathring L^2(D)$. In other words, we need to show that if $v\in\mathring L^2(D)$ satisfies
 \begin{equation}\label{9101}
 \int_Dv(x_1,x_2)\sin \left(\frac{2k\pi x_2}{H}\right) d\mathbf x=0, \quad k=1,2,\cdot\cdot\cdot,
 \end{equation}
  \begin{equation}\label{91011}
  \int_Dv(x_1,x_2)\cos\left(\frac{2k\pi x_2}{H}\right)d\mathbf x=0,\quad k=1,2,\cdot\cdot\cdot,
 \end{equation}
 \begin{equation}\label{9102}
 \int_Dv(x_1,x_2)\sin\left(\frac{2n\pi x_1}{L}\right)\sin\left(\frac{k\pi x_2}{H}\right)d\mathbf x=0,\quad n, k=1,2,\cdot\cdot\cdot,
 \end{equation}
  \begin{equation}\label{91022}
   \int_Dv(x_1,x_2)\cos\left(\frac{2n\pi x_1}{L}\right)\sin\left(\frac{k\pi x_2}{H}\right)d\mathbf x=0,\quad n, k=1,2,\cdot\cdot\cdot, 
 \end{equation}
then $v(\mathbf x)=0$ a.e.  $\mathbf x\in D$.  
We prove this  via the following three steps:
 \begin{itemize}
   \item [(1)] Denote 
   \[\varphi(x_2)=\int_0^Lv(x_1,x_2)dx_1.\]
  By \eqref{9101}, \eqref{91011} and the fact that $\mathsf I_v=0,$  we have that 
 \[\int_0^H\varphi(x_2)\sin \left(\frac{2k\pi x_2}{H}\right)dx_2=0, \quad k=1,2,\cdot\cdot\cdot,\]
   \[\int_0^H\varphi(x_2)\cos \left(\frac{2k\pi x_2}{H}\right)dx_2=0,\quad k=0,1,2,\cdot\cdot\cdot. \]
On the other hand, it is well known that $\left\{\sin \left(\frac{2k\pi x_2}{H}\right)\right\}_{k=1}^\infty\cup \left\{\cos\left(\frac{2k\pi x_2}{H}\right)\right\}_{k=0}^\infty$ is complete in $L^2(0,H)$. Hence $\varphi(x_2)= 0$   a.e.  $x_2\in(0,H)$, or equivalently, 
        \begin{equation}\label{9103}
       \int_0^Lv(x_1,x_2)dx_1= 0\quad{\rm a.e.}\,\,x_2\in(0,H).
        \end{equation}
   \item  [(2)] Denote 
   \[\xi_k(x_1)= \int_0^H v(x_1,x_2) \sin\left(\frac{k\pi x_2}{H}\right)dx_2,\quad k=1,2,\cdot\cdot\cdot.\]
   By \eqref{9102}, \eqref{91022} and \eqref{9103}, we have that 
 \[\int_0^L\xi_k(x_1)\sin \left(\frac{2n\pi x_1}{L}\right)dx_1=0, \quad n=1,2,\cdot\cdot\cdot,\]
   \[\int_0^L\xi_k(x_1)\cos \left(\frac{2n\pi x_1}{L}\right)dx_1=0,\quad n=0,1,2,\cdot\cdot\cdot. \]
In combination with the fact that $\left\{\sin \left(\frac{2n\pi x_1}{L}\right)\right\}_{n=1}^\infty\cup \left\{\cos\left(\frac{2n\pi x_1}{L}\right)\right\}_{n=0}^\infty$ is complete in $L^2(0,L)$,
we deduce that $\xi_k(x_1)=0$  a.e. $x_1\in(0,L).$ So we have proved that for any positive integer $k$,
  \begin{equation}\label{9104}
  \int_0^H v(x_1,x_2) \sin\left(\frac{k\pi x_2}{H}\right)dx_2=0 
  \end{equation}
  a.e. in $(0,L)$.
     \item[(3)]Fix $x_1$ such that \eqref{9104} holds. Extend $v$ as an odd function to $(-H,H)$. Then 
        \begin{equation}\label{9105} 
         \int_{-H}^H v(x_1,x_2) \cos\left(\frac{k\pi x_2}{H}\right)dx_2=0,\quad k=0,1,2,\cdot\cdot\cdot 
         \end{equation}
  by odd symmetry.  Moreover, in view of \eqref{9104}, it holds that
   \begin{equation}\label{91077}
  \int_{-H}^H v(x_1,x_2) \sin\left(\frac{k\pi x_2}{H}\right)dx_2=0 ,\quad k=1,2,\cdot\cdot\cdot 
  \end{equation}
    a.e. in $(0,L)$. 
   Since $\left\{\sin \left(\frac{k\pi x_2}{H}\right)\right\}_{k=1}^\infty\cup \left\{\cos\left(\frac{k\pi x_2}{H}\right)\right\}_{k=0}^\infty$ is complete in $L^2(-H,H)$, we get from \eqref{9105} and \eqref{91077} that 
  \[v(x_1,x_2)=0\quad{\rm a.e. }\,x_2\in(-H,H).\]
  To summarize, we have proved that for a.e. $x_1\in(0,L)$, it holds that $v(x_1,x_2)=0$ a.e. $x_2\in(-H,H)$. Hence  $v(\mathbf x)=0$ a.e.  $\mathbf x\in D$.
        
 \end{itemize}

\end{proof}

\section{Rearrangements, convexity, and stability criterion}\label{sec3}

Throughout this section, let $1<p<\infty$ and $w\in L^p(D)$ be fixed. 
For simplicity, denote $\mathcal R:=\mathcal R_w$ and  
$\mathcal I:=\mathcal I_w$, where $\mathcal R_w$ is the rearrangement class of $w$ defined by \eqref{deforc}, and $\mathcal I_w$ is defined by \eqref{defoi}.

\subsection{Some preliminaries}

We present several lemmas concerning the geometric and functional properties of the functional $E$, defined by  \eqref{defoe}, and the rearrangement class $\mathcal R$ for later use. 
\begin{lemma}\label{lem3001}
 
\begin{itemize}
  \item [(i)] $E$ is well-defined and weakly sequentially continuous in $L^p(D)$.
  \item [(ii)] $E$ is strictly convex, i.e., for any $v_1,v_2\in L^p(D)$ and $\theta\in(0,1)$, it holds that
      \[E(\theta v_1+(1-\theta))\leq \theta E(v_1)+(1-\theta)E(v_2),\]
      and the equality is attained if and only $v_1=v_2$.
\end{itemize}
 \end{lemma}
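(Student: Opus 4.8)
The plan is to exploit the two structural features of $E(\omega)=\frac12\int_D\omega\mathcal G\omega\,d\mathbf x$: that $\mathcal G$ is a self-adjoint smoothing operator, so $E$ admits the Dirichlet-energy representation $E(\omega)=\frac12\int_D|\nabla\mathcal G\omega|^2\,d\mathbf x$; and that $E$ is the quadratic form attached to the positive-definite symmetric bilinear form $B(v_1,v_2):=\int_D v_1\mathcal G v_2\,d\mathbf x=\int_D\nabla\mathcal Gv_1\cdot\nabla\mathcal Gv_2\,d\mathbf x$. For the well-definedness in part (i), I would first recall that standard $L^p$ elliptic estimates make $\mathcal G$ a bounded map from $L^p(D)$ into $W^{2,p}(D)\cap H_0^1(D)$. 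Since $D$ is planar and $p>1$, the Sobolev embedding theorem gives $W^{2,p}(D)\hookrightarrow C^0(\bar D)\subset L^\infty(D)$, so for $\omega\in L^p(D)$ the product $\omega\,\mathcal G\omega$ lies in $L^1(D)$ by H\"older's inequality and $E(\omega)$ is finite; integrating by parts then yields the nonnegative representation $E(\omega)=\frac12\int_D|\nabla\mathcal G\omega|^2\,d\mathbf x$.

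For the weak sequential continuity, suppose $\omega_n\rightharpoonup\omega$ in $L^p(D)$. Weakly convergent sequences are norm-bounded, hence $\{\mathcal G\omega_n\}$ is bounded in $W^{2,p}(D)$; by the compactness of the embedding $W^{2,p}(D)\hookrightarrow\hookrightarrow L^{p'}(D)$, with $p'=p/(p-1)$ (for instance through $W^{2,p}(D)\hookrightarrow\hookrightarrow C^0(\bar D)\hookrightarrow L^{p'}(D)$), we obtain $\mathcal G\omega_n\to\mathcal G\omega$ strongly in $L^{p'}(D)$, the full sequence converging since the weak limit identifies the strong limit. Writing
\[
2E(\omega_n)-2E(\omega)=\int_D\omega_n\bigl(\mathcal G\omega_n-\mathcal G\omega\bigr)\,d\mathbf x+\int_D(\omega_n-\omega)\,\mathcal G\omega\,d\mathbf x,
\]
the first integral is bounded by $\|\omega_n\|_{L^p(D)}\|\mathcal G\omega_n-\mathcal G\omega\|_{L^{p'}(D)}\to0$, while the second tends to $0$ because $\mathcal G\omega\in L^{p'}(D)$ and $\omega_n\rightharpoonup\omega$; hence $E(\omega_n)\to E(\omega)$.

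For part (ii), I would reduce strict convexity to the positive-definiteness of $B$ via the elementary quadratic identity
\[
\theta E(v_1)+(1-\theta)E(v_2)-E\bigl(\theta v_1+(1-\theta)v_2\bigr)=\tfrac12\,\theta(1-\theta)\,B(v_1-v_2,v_1-v_2),
\]
valid for any symmetric bilinear form, where here $B(v,v)=\int_D|\nabla\mathcal G v|^2\,d\mathbf x\geq0$. Since $\theta\in(0,1)$ the right-hand side is nonnegative, which gives the convexity inequality; and it vanishes precisely when $B(v_1-v_2,v_1-v_2)=0$, i.e.\ when $\nabla\mathcal G(v_1-v_2)\equiv0$. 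As $\mathcal G(v_1-v_2)\in H_0^1(D)$ vanishes on $\partial D$, this forces $\mathcal G(v_1-v_2)\equiv0$, whence $v_1-v_2=-\Delta\mathcal G(v_1-v_2)=0$, establishing the claimed equality case.

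The argument is essentially routine. The only points requiring care are selecting the correct Sobolev embedding (for the fixed $p\in(1,\infty)$, ensuring $W^{2,p}(D)$ embeds compactly into the dual space $L^{p'}(D)$, which the chain through $C^0(\bar D)$ supplies for every such $p$), and the identification of the equality case in (ii), where the positive-definiteness of $B$ (equivalently, the injectivity of $\mathcal G$) is the decisive ingredient. I do not anticipate any genuine obstacle.
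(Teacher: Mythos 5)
Your proposal is correct and follows essentially the same route as the paper, which proves (i) by standard elliptic estimates plus the Sobolev embedding theorem and (ii) by the symmetry and positive-definiteness of $\mathcal G$ verified through integration by parts; you have simply written out in full the details that the paper leaves as a sketch. No gaps.
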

 
 \begin{proof}
 Item (i) follows from standard elliptic estimates and the Sobolev embedding theorem. Item (ii) follows from the fact  that $\mathcal G$ is  symmetric, i.e.,
 \begin{equation}\label{ii318}
 \int_Dv_1\mathcal Gv_2 d\mathbf x=\int_Dv_2\mathcal Gv_1 d\mathbf x,\quad \forall\,v_1,v_2\in L^p(D),
 \end{equation}
 and positive-definite, i.e., 
  \begin{equation}\label{ii319}
  \int_Dv \mathcal Gv  d\mathbf x\geq 0\quad\forall\,v\in L^p(D),
  \end{equation}
  with the inequality being an equality if and only if $v=0.$ Note that \eqref{ii318} and \eqref{ii319} can be easily verified by integration by parts.
  \end{proof}

For a nonempty convex set $C$ of a real vector space $X$, a point $x\in C$ is called an \emph{extreme point} of $C$ if there do \emph{not} exist  $y,z\in C$ and $\theta\in(0,1)$ such that   $y\neq z$ and  $x=\theta y+(1-\theta)z.$

\begin{lemma}\label{lem312}
Denote by $\bar{\mathcal R}$ the \emph{weak} closure of $\mathcal R$ in $L^p(D)$.  Then
\begin{itemize}
\item[(i)] $\bar{\mathcal R}$ is convex and weakly compact in $L^p(D)$,  and the set of extremum points of $\bar{\mathcal R}$ is exactly $\mathcal R$; 
  \item [(ii)]  $\bar{\mathcal R}\cap\mathcal I$ is convex and weakly compact in $L^p(D)$,   and the set of extremum points of $\bar{\mathcal R}\cap\mathcal I$ is exactly $\mathcal R\cap\mathcal I$. 
\end{itemize}
 
\end{lemma}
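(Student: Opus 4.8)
The plan is to treat part (i) as the now-classical description of the weak closure of a rearrangement class due to Burton, and to reduce part (ii) to it by a perturbation argument that accommodates the single linear constraint defining $\mathcal I$. For part (i), I would first note that every element of $\mathcal R$ has the same $L^p$ norm as $w$, since equimeasurable functions share all $L^p$ norms; thus $\mathcal R$ is norm-bounded, and because $1<p<\infty$ the space $L^p(D)$ is reflexive, so bounded sets are relatively weakly compact and the weak closure $\bar{\mathcal R}$ is weakly compact. Convexity of $\bar{\mathcal R}$ and the identification of its set of extreme points with $\mathcal R$ are exactly the content of Burton's rearrangement theory, so I would cite \cite{BAR,BMcL}: the point is that $\bar{\mathcal R}$ coincides with the majorization set $\{v\in L^p(D):\ \sup_{|E|=s}\int_E v\,d\mathbf x\le \sup_{|E|=s}\int_E w\,d\mathbf x\ \ \forall s,\ \int_D v\,d\mathbf x=\int_D w\,d\mathbf x\}$, which is convex because each functional $v\mapsto\sup_{|E|=s}\int_E v\,d\mathbf x$ (the integral of the decreasing rearrangement over $(0,s)$) is a supremum of linear functionals, hence convex and weakly lower semicontinuous; the extreme points are precisely those $v$ for which all these inequalities are equalities, i.e. the rearrangements of $w$.

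For part (ii), convexity and weak compactness are immediate: $\mathcal I$ is the affine hyperplane $\{v:\int_D x_2 v\,d\mathbf x=\int_D x_2 w\,d\mathbf x\}$, and since $x_2\in L^\infty(D)\subset L^{p'}(D)$ the impulse functional $v\mapsto\int_D x_2 v\,d\mathbf x$ is weakly continuous, so $\mathcal I$ is weakly closed and $\bar{\mathcal R}\cap\mathcal I$ is a weakly compact convex set as the intersection of a weakly compact convex set with a weakly closed affine one. For the extreme points, one inclusion is elementary: if $v\in\mathcal R\cap\mathcal I$ then by part (i) $v$ is extreme in the larger set $\bar{\mathcal R}$, and an extreme point of a convex set is automatically extreme in every convex subset containing it, so $v\in\mathrm{ext}(\bar{\mathcal R}\cap\mathcal I)$.

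The reverse inclusion is the main obstacle and the only place requiring real work. I must show that any $v\in\bar{\mathcal R}\cap\mathcal I$ that is \emph{not} a rearrangement fails to be extreme in $\bar{\mathcal R}\cap\mathcal I$. By part (i) such a $v$ is non-extreme in $\bar{\mathcal R}$, so the symmetric perturbation cone $N_v:=\{h\in L^p(D):\ v+h\in\bar{\mathcal R}\ \text{and}\ v-h\in\bar{\mathcal R}\}$ is nontrivial; using convexity of $\bar{\mathcal R}$ one checks at once that $N_v=(\bar{\mathcal R}-v)\cap(v-\bar{\mathcal R})$ is a convex, symmetric set containing $0$. The difficulty is that a single admissible direction $h\in N_v$ need not satisfy $\int_D x_2 h\,d\mathbf x=0$, so $v\pm h$ may leave the hyperplane $\mathcal I$; geometrically, $v$ could still be the transversal crossing of a one-dimensional face of $\bar{\mathcal R}$ with $\mathcal I$, and hence extreme in $\bar{\mathcal R}\cap\mathcal I$.

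The resolution, which is the crux, is to prove that at a non-rearrangement the cone $N_v$ is in fact infinite-dimensional. Because $v\notin\mathcal R$, the majorization inequalities of part (i) are strict on a set of levels of positive measure, and this slack produces a set $A\subset D$ with $|A|>0$ and a $\tau>0$ such that $v+g\in\bar{\mathcal R}$ for every $g$ supported in $A$ with $\|g\|_{L^\infty(D)}\le\tau$ and $\int_A g\,d\mathbf x=0$; this family is infinite-dimensional since $|A|>0$. Within it I can impose the two linear conditions $\int_A g\,d\mathbf x=0$ and $\int_A x_2 g\,d\mathbf x=0$ and still find some $g\not\equiv 0$, and then $h:=g$ gives $v\pm h\in\bar{\mathcal R}\cap\mathcal I$, contradicting extremeness. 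Equivalently, once $N_v$ is known to span a space of dimension at least $2$, say containing independent $h_1,h_2$, its convex symmetric structure lets me scale the combination $\ell(h_2)h_1-\ell(h_1)h_2$, where $\ell(h):=\int_D x_2 h\,d\mathbf x$, back into $N_v$ to obtain a nonzero element of $N_v\cap\ker\ell$. I expect the genuine technical effort to lie entirely in extracting the fat, mean-zero redistribution region $A$ from the strict-majorization slack, and I would derive this from Burton's representation of $\bar{\mathcal R}$ established for part (i).
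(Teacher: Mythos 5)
The paper does not actually prove this lemma: item (i) is quoted from \cite[Theorem 6]{BMA} (or \cite[Lemma 2.2]{BHP}) and item (ii) from \cite[Lemma 4.4]{BMcL} (or \cite[Lemma 5]{BRy}), so there is no in-paper argument to compare yours against. What you have written is, in effect, a reconstruction of how those cited results are proved, and the route is the standard one. Your easy steps are all correct: boundedness of $\mathcal R$ plus reflexivity of $L^p$ gives weak compactness of $\bar{\mathcal R}$; weak continuity of $v\mapsto\int_D x_2v\,d\mathbf x$ makes $\mathcal I$ weakly closed, so $\bar{\mathcal R}\cap\mathcal I$ is weakly compact and convex; the inclusion $\mathcal R\cap\mathcal I\subseteq\mathrm{ext}(\bar{\mathcal R}\cap\mathcal I)$ follows from (i) because extremality passes to convex subsets; and the linear-algebra step (from two independent directions $h_1,h_2\in N_v$ to a nonzero element of $N_v\cap\ker\ell$, using that $N_v$ is convex and symmetric) is sound. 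You also correctly identify that the whole difficulty of (ii) is that non-extremality in $\bar{\mathcal R}$ alone gives only one admissible direction, which may be transversal to $\mathcal I$.

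The one genuine gap is the step you defer: the claim that $v\in\bar{\mathcal R}\setminus\mathcal R$ yields a set $A$ of positive measure and $\tau>0$ such that $v+g\in\bar{\mathcal R}$ for \emph{every} mean-zero $g$ supported in $A$ with $\|g\|_{L^\infty(D)}\le\tau$. This is not a routine consequence of the majorization representation, and the obvious attempt fails: writing $\Phi(s)=\int_0^s w^*-\int_0^s v^*$, one has $\sup_{|E|=s}\int_E(v+g)\le\int_0^s v^*+\tau\min(s,|A|/2)$, but $\Phi$ vanishes at $s=0$ and at the endpoints of each component of $\{\Phi>0\}$, so this crude error term is \emph{not} dominated by the slack for small $s$. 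The actual proof has to locate $A$ on a plateau of $v^*$ (a level set of $v$ of positive measure sitting inside an interval where $\Phi>0$) so that small redistributions on $A$ do not increase $\sup_{|E|=s}\int_E(\cdot)$ for the values of $s$ where the slack is zero; establishing the existence of such a plateau from $v\notin\mathcal R$ is precisely the content of \cite[Lemma 5]{BRy} and \cite[Lemma 4.4]{BMcL}. So your outline is the right proof, but as written it replaces the cited lemma by an assertion rather than an argument; either supply that construction or cite it, as the paper does.
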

\begin{proof}
 Item (i) follows from  \cite[Theorem 6]{BMA} or in \cite[Lemma 2.2]{BHP}. Item (ii) is a special case of  \cite[Lemma 4.4]{BMcL} or \cite[Lemma 5]{BRy}.
\end{proof}

\begin{lemma}\label{lem313}  
Fix $w'\in L^p(D)$. Denote $\mathcal R':=\mathcal R_{w'}$. 
  Then for any $v\in\mathcal R$, there exists $v'\in \mathcal R'$ such that
\begin{equation}\label{kd01}
\|v-v'\|_{L^p(D)}=\min_{v_1\in\mathcal R, v_2\in\mathcal R'}\|v_1-v_2\|_{L^p(D)}.
\end{equation}
\end{lemma}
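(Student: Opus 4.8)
The plan is to show that the minimum distance between the two rearrangement classes is realized, for every $v\in\mathcal R$, by the rearrangement of $w'$ that is \emph{comonotone} with $v$. I emphasize at the outset that the weak compactness of Lemma \ref{lem312} is \emph{not} the right tool here: minimizing $u\mapsto\|v-u\|_{L^p(D)}$ over the weak closure $\bar{\mathcal R}'$ computes the distance from $v$ to the \emph{convex hull} of $\mathcal R'$, which is strictly smaller than the distance to $\mathcal R'$ in general. Indeed, when $\mathcal R'$ is the class of rearrangements of a characteristic function, $\bar{\mathcal R}'$ can contain $v$ itself once the integral constraints match, so the (unique, by strict convexity) minimizer over $\bar{\mathcal R}'$ need not be an extreme point and need not lie in $\mathcal R'$. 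Hence the nearest point in $\mathcal R'$ must be produced by an explicit rearrangement argument rather than by a soft lower-semicontinuity/compactness argument.

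First I would reduce the two-variable minimization to a one-variable one. Fix $v\in\mathcal R$ and set $m(v):=\inf_{u\in\mathcal R'}\|v-u\|_{L^p(D)}$. Writing $v^\Delta,\,w'^\Delta:(0,|D|)\to\mathbb R$ for the decreasing rearrangements of $v$ and $w'$ (here $|D|=LH$), I claim
\[
m(v)=\Big(\int_0^{|D|}\big|v^\Delta(s)-w'^\Delta(s)\big|^p\,ds\Big)^{1/p},
\]
with the infimum attained. Attainment, together with the upper bound, comes from constructing a rearrangement $u^\sharp\in\mathcal R'$ comonotone with $v$, i.e.\ $u^\sharp$ is a nondecreasing function of $v$ on its non-degenerate part and, on each level set of $v$, carries the matching slice of the values of $w'$; existence of such a rearrangement is a standard fact of Burton's rearrangement theory (cf.\ \cite{BAR,BMcL}). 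For this $u^\sharp$ one has $\int_D|v-u^\sharp|^p\,d\mathbf x=\int_0^{|D|}|v^\Delta-w'^\Delta|^p\,ds$ by matching decreasing rearrangements. The matching lower bound $\|v-u\|_{L^p(D)}\ge\big(\int_0^{|D|}|v^\Delta-w'^\Delta|^p\,ds\big)^{1/p}$ for every $u\in\mathcal R'$ is the crux: it is the Hardy--Littlewood--P\'olya rearrangement inequality for the cost $(s,t)\mapsto|s-t|^p$. Since this cost is a convex function of $s-t$ (equivalently, $|s-t|^p$ is submodular), the comonotone coupling minimizes $\int_D|v-u|^p\,d\mathbf x$ among all rearrangements $u$ of $w'$; equivalently, the monotone map is the optimal transport plan for a convex cost of the difference.

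Finally, the right-hand side of the displayed formula depends only on $v^\Delta$ and $w'^\Delta$, hence only on the distributions of $w$ and $w'$. As every $v\in\mathcal R$ shares the distribution of $w$, the value $m(v)$ is independent of $v\in\mathcal R$; call it $m$. Consequently $\min_{v_1\in\mathcal R,\,v_2\in\mathcal R'}\|v_1-v_2\|_{L^p(D)}=\inf_{v_1\in\mathcal R}m(v_1)=m$, and for the given $v$ the comonotone rearrangement $v':=u^\sharp\in\mathcal R'$ satisfies $\|v-v'\|_{L^p(D)}=m$, proving the claim. I expect the main obstacle to be a careful proof of the comonotone rearrangement inequality on the nonatomic domain $D$, in particular handling level sets of $v$ of positive measure, where $u^\sharp$ is determined only up to an internal rearrangement (the value of the integral being insensitive to this freedom). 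This is precisely the point at which Burton's rearrangement theory, or a one-dimensional optimal-transport argument after passing to decreasing rearrangements, is invoked.
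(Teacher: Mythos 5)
Your argument is correct and is essentially the standard proof of this fact: the paper itself gives no argument but cites Burton's Lemma 2.3 in \cite{Bata}, which establishes exactly the identity $\min_{v_1\in\mathcal R,\,v_2\in\mathcal R'}\|v_1-v_2\|_{L^p(D)}=\|w^{\Delta}-w'^{\Delta}\|_{L^p(0,|D|)}$ using the same two ingredients you propose --- the convex-cost rearrangement inequality (Lorentz/Chong) for the lower bound, and a measure-preserving map $\sigma$ with $v=v^{\Delta}\circ\sigma$ (Ryff-type) to realize the comonotone rearrangement $u^{\sharp}=w'^{\Delta}\circ\sigma\in\mathcal R'$ for attainment. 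Your opening remark is also well taken: minimizing over the weak closure $\bar{\mathcal R}'$ would only locate a nearest point of the convex hull, which need not lie in $\mathcal R'$, so the explicit comonotone construction (with the noted freedom on level sets of $v$ of positive measure, which does not affect the value of the integral) is genuinely needed.
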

\begin{proof}
See Lemma 2.3 in \cite{Bata}.
\end{proof}

\begin{lemma}\label{lem314}  
Let $q=p/(p-1)$ be the H\"older conjugate of $p$.  Fix $\xi\in L^q(D)$. 
Suppose that $\tilde w\in\mathcal R$ satisfies 
\[\tilde w(\mathbf x)=\phi(\xi(\mathbf x))\,\,\,{\rm a.e. }\,\,\mathbf x\in D\]
for some increasing function $\phi:\mathbb R\to\mathbb R\cup\{\pm\infty\}$. Then $\tilde  w$ is the unique maximizer of   $<\xi,\cdot>$ relative $\bar{\mathcal R}.$ Here
\[<\xi, v>=\int_D\xi vd\mathbf x\quad\forall\,v\in L^p(D).\]
\end{lemma}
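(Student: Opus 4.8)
The plan is to exhibit $\tilde w$ as the maximizer by a direct rearrangement estimate on $\mathcal R$, and then to upgrade \emph{uniqueness} from $\mathcal R$ to the weak closure $\bar{\mathcal R}$ using the extreme-point description in Lemma \ref{lem312}. First I would note that $v\mapsto\langle\xi,v\rangle$ is weakly sequentially continuous on $L^p(D)$, since $\xi\in L^q(D)=(L^p(D))^\ast$. As $\mathcal R$ is norm-bounded and weakly dense in $\bar{\mathcal R}$, and $L^p(D)$ is reflexive for $1<p<\infty$, this gives $\sup_{\bar{\mathcal R}}\langle\xi,\cdot\rangle=\sup_{\mathcal R}\langle\xi,\cdot\rangle$. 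It therefore suffices to prove (a) that $\tilde w$ attains $\sup_{\mathcal R}\langle\xi,\cdot\rangle$ and is the unique maximizer \emph{within} $\mathcal R$, and (b) that no further maximizers appear in $\bar{\mathcal R}$.

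For step (a) the key device is a layer-cake comparison of super-level-set integrals. Let $w^\ast$ denote the common decreasing rearrangement of the functions in $\mathcal R$, and set $m(t):=|\{\xi>t\}|$. For any $v\in\mathcal R$ the integral of $v$ over a set of measure $m(t)$ is maximized by the top portion of $w^\ast$, so $\int_{\{\xi>t\}}v\,d\mathbf x\le\int_0^{m(t)}w^\ast$; and because $\phi$ is increasing, $\tilde w=\phi(\xi)$ assumes precisely its largest values on $\{\xi>t\}$ for all but countably many $t$ (those with $|\{\xi=t\}|=0$), whence $\int_{\{\xi>t\}}\tilde w=\int_0^{m(t)}w^\ast$. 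Thus $\int_{\{\xi>t\}}(v-\tilde w)\le0$ for a.e. $t$. Since every element of $\mathcal R$ has the same total integral $\int_D w$, the signed layer-cake identity yields
\[
\langle\xi,v\rangle-\langle\xi,\tilde w\rangle=\int_{-\infty}^{\infty}\Big(\int_{\{\xi>t\}}(v-\tilde w)\,d\mathbf x\Big)\,dt\le 0,
\]
so $\tilde w$ is a maximizer. Equality forces $\int_{\{\xi>t\}}v=\int_0^{m(t)}w^\ast$ for a.e. $t$, which means each super-level set $\{\xi>t\}$ is sandwiched, up to null sets, between $\{v>a_t\}$ and $\{v\ge a_t\}$ for the appropriate value $a_t$. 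This makes $v$ comonotone with $\xi$, hence $v=\psi(\xi)$ a.e. for some increasing $\psi$; and since $\phi(\xi)$ and $\psi(\xi)$ then have the same distribution, their decreasing rearrangements $\phi(\xi^\ast)$ and $\psi(\xi^\ast)$ agree a.e., forcing $\phi=\psi$ on the essential range of $\xi$ and therefore $v=\tilde w$.

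For step (b) I would invoke Lemma \ref{lem312}(i). The maximizer set $\mathcal M:=\{v\in\bar{\mathcal R}\mid \langle\xi,v\rangle=\sup_{\bar{\mathcal R}}\langle\xi,\cdot\rangle\}$ is nonempty, convex, and weakly compact, being the intersection of the weakly compact convex set $\bar{\mathcal R}$ with the level set of the affine functional $\langle\xi,\cdot\rangle$ at its maximum; in particular $\mathcal M$ is a face of $\bar{\mathcal R}$. Consequently every extreme point of $\mathcal M$ is an extreme point of $\bar{\mathcal R}$, hence lies in $\mathcal R$ by Lemma \ref{lem312}(i). By step (a) the only maximizer in $\mathcal R$ is $\tilde w$, so $\tilde w$ is the unique extreme point of $\mathcal M$; as a weakly compact convex set coincides with the closed convex hull of its extreme points, $\mathcal M=\{\tilde w\}$, which is the assertion.

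The hard part will be the equality analysis in step (a), specifically the rigidity on level sets of $\xi$ of positive measure: I must use that the hypothesis $\tilde w=\phi(\xi)\in\mathcal R$ rigidly constrains the distribution of $\tilde w$ on such level sets, so that the extremality condition $\int_{\{\xi>t\}}v=\int_0^{m(t)}w^\ast$ genuinely forces $v$ to be an increasing function of $\xi$, rather than merely to have the correct conditional averages on each level set. By contrast, the passage from $\mathcal R$ to $\bar{\mathcal R}$ in step (b) is routine once Lemma \ref{lem312} is available; it is the quantitative equality case of the rearrangement estimate that demands the most care.
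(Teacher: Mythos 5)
The paper itself does not prove this lemma; it simply cites Theorem 3 of \cite{BMA} (equivalently Lemma 2.4 of \cite{BMcL}), so there is no in-paper argument to compare against. Your overall architecture --- prove existence and uniqueness of the maximizer within $\mathcal R$ by a layer-cake estimate, then pass to $\bar{\mathcal R}$ via the face/extreme-point argument and Lemma \ref{lem312}(i) --- is sound, and step (b) in particular is complete: the maximizer set is a nonempty weakly compact convex face of $\bar{\mathcal R}$, its extreme points are extreme points of $\bar{\mathcal R}$ and hence lie in $\mathcal R$, and Krein--Milman finishes it. The existence half of step (a) is also fine: the sandwich $\{\tilde w>\phi(t^+)\}\subseteq\{\xi>t\}\subseteq\{\tilde w\ge\phi(t^+)\}$ gives $\int_{\{\xi>t\}}\tilde w\,d\mathbf x=\int_0^{m(t)}w^*$ for every $t$, while the Hardy--Littlewood bound gives $\int_{\{\xi>t\}}v\,d\mathbf x\le\int_0^{m(t)}w^*$ for every $v\in\mathcal R$.

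The genuine gap is the inference, in the equality analysis, that ``$v$ is comonotone with $\xi$, hence $v=\psi(\xi)$ a.e.\ for some increasing $\psi$.'' That implication fails in general: on a level set $\{\xi=t_0\}$ of positive measure (or on $\{t_1\le\xi\le t_2\}$ when $\phi$ is constant there), comonotonicity with $\xi$ places no constraint on how $v$ distributes its values, so $v$ need not be a function of $\xi$ at all; the extreme case $\xi\equiv\mathrm{const}$ makes the extremality condition vacuous. You flag this yourself in the closing paragraph but leave it unexecuted, so as written the uniqueness proof is incomplete. The fix is exactly the rigidity you allude to, and it should be carried out: passing to limits in $t$, the identity $\int_{\{\xi>t\}}v\,d\mathbf x=\int_0^{m(t)}w^*$ also holds with $\{\xi\ge t_0\}$ in place of $\{\xi>t_0\}$, and from the two ``top-values'' properties one checks that $v$ restricted to $L:=\{\xi=t_0\}$ is equimeasurable with $w^*$ restricted to the parameter interval $\left(m(t_0),\,m(t_0)+|L|\right)$. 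The same statement holds for $\tilde w|_L$, which is the constant $\phi(t_0)$; hence $w^*$ equals $\phi(t_0)$ on that interval and therefore $v=\phi(t_0)=\tilde w$ a.e.\ on $L$. Off the at most countably many such level sets the comonotonicity argument does pin $v$ down, so uniqueness in $\mathcal R$ follows. With that paragraph inserted your proof closes; alternatively one can simply quote Burton as the paper does.
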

\begin{proof}
See Theorem 3 in \cite{BMA} or Lemma 2.4 in \cite{BMcL}.
\end{proof}

\subsection{A variational problem involving rearrangements}
 
  Consider the following maximization problem:
\begin{equation}\label{mmp0}
M=\sup_{v\in\mathcal R\cap\mathcal I}E(v).
\end{equation}
Denote by $\mathcal M$ the set of maximizers of \eqref{mmp0}, i.e.,
\[\mathcal M:=\left\{v\in \mathcal R\cap\mathcal I\mid E(v)=M\right\}.\]

To analyze \eqref{mmp0}, we need the following lemma, which is proved based on convexity.
\begin{lemma}\label{btrbar}
The maximum of $E$ relative to $\bar{\mathcal R}\cap\mathcal I$ is attained, and every maximizer must belong to ${\mathcal R}\cap\mathcal I$. In particular, it holds that
\[\sup_{v\in {\mathcal R}\cap\mathcal I}E(v)=\sup_{v\in\bar{\mathcal R}\cap\mathcal I}E(v).\]
\end{lemma}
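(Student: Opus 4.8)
The plan is to treat this as a standard convex-maximization argument: a strictly convex, weakly continuous functional maximized over a convex, weakly compact set attains its maximum only at extreme points, and by Lemma \ref{lem312}(ii) the extreme points of $\bar{\mathcal R}\cap\mathcal I$ are precisely the elements of $\mathcal R\cap\mathcal I$. Concretely, I would split the proof into an attainment step and an extreme-point step, and then read off the supremum identity as a formality.

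For attainment, I would take a maximizing sequence $\{v_n\}\subset\bar{\mathcal R}\cap\mathcal I$ with $E(v_n)\to\sup_{\bar{\mathcal R}\cap\mathcal I}E$. Since $\bar{\mathcal R}\cap\mathcal I$ is weakly compact in the reflexive space $L^p(D)$ by Lemma \ref{lem312}(ii), it is weakly sequentially compact, so after passing to a subsequence $v_n\rightharpoonup v^*$ for some $v^*\in\bar{\mathcal R}\cap\mathcal I$. The weak sequential continuity of $E$ from Lemma \ref{lem3001}(i) then gives $E(v^*)=\lim_n E(v_n)=\sup_{\bar{\mathcal R}\cap\mathcal I}E$, so the maximum is attained at $v^*$.

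The heart of the argument is to show that \emph{any} maximizer $v^*$ lies in $\mathcal R\cap\mathcal I$. I would argue by contradiction using the characterization in Lemma \ref{lem312}(ii): if $v^*\notin\mathcal R\cap\mathcal I$, then $v^*$ is not an extreme point of the convex set $\bar{\mathcal R}\cap\mathcal I$, so there exist $y,z\in\bar{\mathcal R}\cap\mathcal I$ with $y\neq z$ and $\theta\in(0,1)$ such that $v^*=\theta y+(1-\theta)z$. The strict convexity of $E$ (Lemma \ref{lem3001}(ii)) then yields
\[
E(v^*)=E(\theta y+(1-\theta)z)<\theta E(y)+(1-\theta)E(z)\leq \sup_{\bar{\mathcal R}\cap\mathcal I}E=E(v^*),
\]
a contradiction. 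Hence $v^*$ is an extreme point, i.e. $v^*\in\mathcal R\cap\mathcal I$. The supremum identity is then immediate: the inclusion $\mathcal R\cap\mathcal I\subset\bar{\mathcal R}\cap\mathcal I$ gives $\sup_{\mathcal R\cap\mathcal I}E\leq\sup_{\bar{\mathcal R}\cap\mathcal I}E$, while the maximizer $v^*\in\mathcal R\cap\mathcal I$ realizes the right-hand supremum, forcing the reverse inequality.

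I do not expect a serious obstacle here, since the nontrivial analytic content—weak compactness of $\bar{\mathcal R}\cap\mathcal I$, the identification of its extreme points, and the weak continuity and strict convexity of $E$—is already packaged in Lemmas \ref{lem312} and \ref{lem3001}. The one conceptual point worth emphasizing is that strict convexity upgrades the usual statement ``some maximizer is extreme'' to ``every maximizer is extreme,'' which is exactly what is needed to conclude that \emph{all} maximizers belong to $\mathcal R\cap\mathcal I$. The only mild care required is to use weak compactness in its sequential form, which is legitimate because $L^p(D)$ is reflexive for $1<p<\infty$.
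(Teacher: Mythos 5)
Your proposal is correct and follows essentially the same route as the paper: attainment via weak compactness of $\bar{\mathcal R}\cap\mathcal I$ plus weak sequential continuity of $E$, then identification of every maximizer as an extreme point via strict convexity, and finally the extreme-point characterization from Lemma \ref{lem312}(ii). The only cosmetic difference is that you phrase the extreme-point step as a contradiction, whereas the paper argues directly that any convex decomposition of a maximizer must be trivial; the content is identical.
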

\begin{proof}
By Lemma \ref{lem3001}(i) and Lemma \ref{lem312}(ii), there exists  $\tilde v\in\bar{\mathcal R}\cap\mathcal I$ such that
\[E(\tilde v)=\sup_{v\in\bar{\mathcal R}\cap\mathcal I}E(v).\]
Our task is to show that $\tilde v\in\bar{\mathcal R}\cap\mathcal I.$ By Lemma \ref{lem312}(ii), it suffices to show that $\tilde v$ is an extreme point of $\bar{\mathcal R}\cap\mathcal I$. To this end, suppose that there exist  $v_1,v_2\in\mathcal R\cap\mathcal I$ and $\theta\in(0,1)$ such that
$\tilde v =\theta v_1+(1-\theta)v_2.$ By the choice of $\tilde v,$ it holds that
\begin{equation}\label{3191}
E(v_1)\leq E(\tilde v),\quad E(v_2)\leq E(\tilde v).
\end{equation}
On the other hand, by Lemma \ref{lem3001}(ii), we have that
\begin{equation}\label{3192}
E(\tilde v)\leq \theta E(v_1)+(1-\theta)E(v_2),
\end{equation}
\begin{equation}\label{3193}
E(\tilde v)= \theta E(v_1)+(1-\theta)E(v_2)\quad\Longleftrightarrow\quad v_1=v_2.
\end{equation}
Combining \eqref{3191}-\eqref{3193}, we deduce that $v_1=v_2$, which means that $\tilde v$ is indeed an extreme point of $\bar{\mathcal R}\cap\mathcal I$. This completes the proof.
\end{proof}

\begin{proposition}\label{propcomt1}
Suppose that $\{v_n\}\subset\mathcal R$ satisfies
\[\lim_{n\to\infty} E (v_n)=M,\quad \lim_{n\to\infty} I(v_n)=I(w).\]
Then, up to a subsequence,  $\{v_{n}\}$  converges to some $\tilde v\in\mathcal M$ strongly in $L^p(D)$ as $n\to\infty.$ In particular, the set $\mathcal M$ is a nonempty and compact subset of $L^p(D)$.
\end{proposition}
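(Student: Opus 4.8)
The plan is to extract a weak subsequential limit of $\{v_n\}$ by weak compactness, to identify this limit as an element of $\mathcal M$ using the weak continuity of $E$ and $I$ together with Lemma \ref{btrbar}, and finally to upgrade the weak convergence to strong convergence by exploiting the fact that all members of a fixed rearrangement class share the same $L^p$ norm. The identification of the limit is a soft weak-compactness argument; the upgrade to strong convergence is where the real work lies.

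First I would note that $\{v_n\}\subset\mathcal R\subset\bar{\mathcal R}$, and by Lemma \ref{lem312}(i) the weak closure $\bar{\mathcal R}$ is weakly compact in $L^p(D)$. Hence, passing to a subsequence (not relabeled), there is $\tilde v\in\bar{\mathcal R}$ with $v_n\rightharpoonup\tilde v$ weakly in $L^p(D)$. Since $x_2\in L^\infty(D)\subset L^q(D)$ with $q=p/(p-1)$, the impulse $I$ defined by \eqref{defoi} is a bounded linear functional on $L^p(D)$, hence weakly continuous; therefore $I(\tilde v)=\lim_{n\to\infty}I(v_n)=I(w)$, so $\tilde v\in\bar{\mathcal R}\cap\mathcal I$. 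By the weak sequential continuity of $E$ (Lemma \ref{lem3001}(i)),
\[E(\tilde v)=\lim_{n\to\infty}E(v_n)=M.\]
On the other hand, Lemma \ref{btrbar} gives $\sup_{\bar{\mathcal R}\cap\mathcal I}E=\sup_{\mathcal R\cap\mathcal I}E=M$ and asserts that every maximizer of $E$ over $\bar{\mathcal R}\cap\mathcal I$ must in fact lie in $\mathcal R\cap\mathcal I$. Since $\tilde v$ attains this maximum, I conclude $\tilde v\in\mathcal R\cap\mathcal I$, and hence $\tilde v\in\mathcal M$.

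The main obstacle is upgrading $v_n\rightharpoonup\tilde v$ to strong convergence in $L^p(D)$. Here I would use that membership in $\mathcal R=\mathcal R_w$ preserves the full distribution function, and in particular the $L^p$ norm: since $v_n\in\mathcal R$ and, by the previous step, $\tilde v\in\mathcal R$, one has $\|v_n\|_{L^p(D)}=\|w\|_{L^p(D)}=\|\tilde v\|_{L^p(D)}$ for every $n$. Thus weak convergence is accompanied by convergence of the norms, and since $L^p(D)$ with $1<p<\infty$ is uniformly convex, it enjoys the Radon--Riesz property; the combination of $v_n\rightharpoonup\tilde v$ with $\|v_n\|_{L^p(D)}\to\|\tilde v\|_{L^p(D)}$ then forces $v_n\to\tilde v$ strongly in $L^p(D)$. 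It is worth emphasizing that the energy maximality is essential precisely at this point: without it the weak limit would only be known to lie in $\bar{\mathcal R}$, where the $L^p$ norm can drop in the limit, and the argument would collapse. It is Lemma \ref{btrbar}---strict convexity of $E$ forcing the maximizer to be an extreme point of $\bar{\mathcal R}\cap\mathcal I$---that pins the weak limit down inside $\mathcal R$ and makes the norms match.

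Finally, for the concluding assertion about $\mathcal M$, nonemptiness follows by applying the first part of the proposition to any maximizing sequence in $\mathcal R\cap\mathcal I$, which exists by the definition of the supremum $M$ in \eqref{mmp0} and satisfies the hypotheses with $I(v_n)\equiv I(w)$. For compactness I would take an arbitrary sequence $\{u_n\}\subset\mathcal M$; then $u_n\in\mathcal R$, $E(u_n)=M$, and $I(u_n)=I(w)$, so the first part yields a subsequence converging strongly in $L^p(D)$ to some element of $\mathcal M$. Hence $\mathcal M$ is sequentially compact, and since $L^p(D)$ is a metric space this is equivalent to compactness.
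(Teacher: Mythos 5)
Your proposal is correct and follows essentially the same route as the paper's proof: weak compactness of $\bar{\mathcal R}$, weak continuity of $I$ and weak sequential continuity of $E$ to place the limit in $\bar{\mathcal R}\cap\mathcal I$ with $E=M$, Lemma \ref{btrbar} to pin it inside $\mathcal R\cap\mathcal I$, and then equality of $L^p$ norms within the rearrangement class combined with uniform convexity (Radon--Riesz) to upgrade to strong convergence. The only difference is that you spell out the nonemptiness and compactness of $\mathcal M$, which the paper leaves as an immediate consequence.
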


\begin{proof} 
  Since $\{v_n\}$ is obviously bounded in $L^p(D)$, we can assume that, up to a subsequence,  $\{v_n\}$  converges \emph{weakly} to some $\tilde v\in\bar{\mathcal R}$  in $L^p(D)$ as $n\to\infty$. It is clear that $I(\tilde v)=I(w)$, hence  $\tilde v\in\bar{\mathcal R}\cap\mathcal I.$
   Moreover, since $E$ is weakly sequentially continuous (cf. Lemma \ref{lem3001}(i)), we have that
  \begin{equation}\label{em01}
  E(\tilde v)=\lim_{n\to\infty} E (v_n)=M.
  \end{equation}
 Applying  Lemma \ref{btrbar}, we deduce that $\tilde v\in\mathcal R\cap\mathcal I,$ and thus $\tilde v\in\mathcal M.$

 To prove strong convergence in $L^p(D)$, observe that for any positive integer $n$,
 \[\|v_{n}\|_{L^p(D)}=\|w\|_{L^p(D)}=\|\tilde v\|_{L^p(D)}.\]
 The second equality holds since we have established that $\tilde v\in\mathcal R.$ Strong convergence then follows from uniform convexity (cf. Proposition 3.32 in \cite[\S 3.7]{Bre}).
  
\end{proof}

\subsection{A Burton-type stability criterion}

To begin with, we introduce the concept of admissible maps in order to illustrate the requirements for the perturbed solutions.

\begin{definition}\label{defadmmap}
A map $\zeta:\mathbb R\to L^p(D)$ is called an \emph{admissible map} if for all $t\in\mathbb R,$ 
\[E(\zeta(t))=E(\zeta(0)),\quad I(\zeta(t))=I(\zeta(0)),\quad \zeta(t)\in\mathcal R_{\zeta(0)}.\]
If, in addition,  $\zeta$ is continuous, we call it  a  \emph{continuous admissible map}.
\end{definition}

By the conservation laws (C1)-(C3),  for any sufficiently smooth Euler flow with vorticity $\omega(t,\mathbf x)$, $\zeta(t):=\omega(t,\cdot)$ is a continuous admissible map.

 The purpose of this subsection is to prove the following Burton-type stability criterion, which is inspired by Burton's work \cite{BAR} (see also \cite{WMA, WZCV, Wdisk}).
\begin{theorem}\label{bsc}
Let $\mathcal M$ be as given in the previous subsection. Then:
 \begin{itemize}
\item[(i)] For any $\varepsilon>0$, there exists some $\delta>0,$ such that for any admissible map $\zeta(t)$, 
  \[\min_{v\in\mathcal M}\|\zeta(0)-v\|_{L^p(D)}<\delta\quad\Longrightarrow\quad \min_{v\in\mathcal M}\|\zeta(t)-v\|_{L^p(D)}<\varepsilon\quad\forall\,t\in\mathbb R.\]
  \item[(ii)]  Let $\mathcal S$ be a nonempty proper subset of $\mathcal M.$  Suppose that  $\mathcal S$ is isolated in $\mathcal M,$ i.e., 
     \[\min_{v_1\in\mathcal S,\,v_2\in\mathcal M\setminus\mathcal S}\|v_1-v_2\|_{L^p(D)} >0.
       \]
 Then, for any $\varepsilon>0$, there exists some $\delta>0$ such that for any continuous admissible map $\zeta(t)$, 
  \[\min_{v\in\mathcal S}\|\zeta(0)-v\|_{L^p(D)}<\delta\quad\Longrightarrow\quad \min_{v\in\mathcal S}\|\zeta(t)-v\|_{L^p(D)}<\varepsilon\quad\forall\,t\in\mathbb R.\]
  \end{itemize}
\end{theorem}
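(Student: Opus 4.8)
The plan is to deduce both stability statements from the compactness result in Proposition~\ref{propcomt1}, arguing by contradiction in each case. The overarching principle is that $\mathcal M$ is the set of maximizers of the energy $E$ over the Euler-invariant constraint set $\mathcal R\cap\mathcal I$, and an admissible map $\zeta$ keeps $E(\zeta(t))$, $I(\zeta(t))$, and the rearrangement class $\mathcal R_{\zeta(0)}$ fixed in $t$. Thus if $\zeta(0)$ starts close to $\mathcal M$, its conserved quantities are close to the optimal values, and the compactness of maximizing sequences forces $\zeta(t)$ to remain close to $\mathcal M$ for all time.

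For part~(i), I would argue by contradiction. Suppose the conclusion fails: then there exist $\varepsilon_0>0$, a sequence of admissible maps $\zeta_n$, and times $t_n$ such that $\min_{v\in\mathcal M}\|\zeta_n(0)-v\|_{L^p(D)}<1/n$ but $\min_{v\in\mathcal M}\|\zeta_n(t_n)-v\|_{L^p(D)}\geq\varepsilon_0$. Pick $u_n\in\mathcal M$ with $\|\zeta_n(0)-u_n\|_{L^p(D)}<1/n$. Since $\zeta_n(0)\in\mathcal R_{\zeta_n(0)}$ and the rearrangement class is $L^p$-closed under the conserved quantities, I would use the weak/strong continuity of $E$ and $I$ (Lemma~\ref{lem3001}(i)) together with the $L^p$-convergence $\zeta_n(0)\to$ (some point of $\mathcal M$) to show $E(\zeta_n(0))\to M$ and $I(\zeta_n(0))\to I(w)$. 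Because $\zeta_n$ is admissible, these conserved values are carried to time $t_n$: $E(\zeta_n(t_n))=E(\zeta_n(0))\to M$ and $I(\zeta_n(t_n))=I(\zeta_n(0))\to I(w)$, while $\zeta_n(t_n)\in\mathcal R_{\zeta_n(0)}$. Here one must take care that the rearrangement class varies with $n$; the key technical point is that $\zeta_n(0)\to u_\infty\in\mathcal M\subset\mathcal R$ strongly, so the functions $\zeta_n(t_n)$ become an (asymptotically) maximizing sequence \emph{for the limiting class $\mathcal R$}. Concretely, I would invoke Lemma~\ref{lem313} (the minimizing-distance lemma between rearrangement classes) to produce comparison elements $\tilde v_n\in\mathcal R$ with $\|\zeta_n(t_n)-\tilde v_n\|_{L^p(D)}\to 0$, so that the $\tilde v_n$ satisfy the hypotheses of Proposition~\ref{propcomt1}. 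That proposition then yields, up to a subsequence, strong $L^p$-convergence of $\tilde v_n$ (hence of $\zeta_n(t_n)$) to some $\tilde v\in\mathcal M$, contradicting $\min_{v\in\mathcal M}\|\zeta_n(t_n)-v\|_{L^p(D)}\geq\varepsilon_0$.

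For part~(ii), the additional ingredient is the \emph{continuity} of $\zeta$ and the topological isolation of $\mathcal S$ in $\mathcal M$. Set $2d_0:=\min_{v_1\in\mathcal S,\,v_2\in\mathcal M\setminus\mathcal S}\|v_1-v_2\|_{L^p(D)}>0$. By part~(i), starting $\delta$-close to $\mathcal S\subset\mathcal M$ guarantees the trajectory stays $\varepsilon$-close to $\mathcal M$ for all time, with $\varepsilon$ chosen small (say $\varepsilon<d_0$). Thus for each $t$ the point $\zeta(t)$ is within $d_0$ of $\mathcal M$ but cannot be simultaneously close to both $\mathcal S$ and $\mathcal M\setminus\mathcal S$. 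The continuity of $t\mapsto\zeta(t)$ in $L^p(D)$, together with the fact that $\zeta(0)$ is near $\mathcal S$, means that $t\mapsto\min_{v\in\mathcal S}\|\zeta(t)-v\|$ is a continuous function that starts small; a connectedness (intermediate-value) argument then shows it cannot jump across the gap of width $\geq d_0$ to the $\mathcal M\setminus\mathcal S$ side without passing through the forbidden intermediate region that part~(i) already excludes. Hence $\zeta(t)$ stays $\varepsilon$-close to $\mathcal S$ for all $t$.

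The main obstacle I anticipate is the bookkeeping in part~(i) around the \emph{$n$-dependence of the rearrangement class}: Proposition~\ref{propcomt1} is stated for a fixed class $\mathcal R=\mathcal R_w$, whereas the perturbed maps $\zeta_n$ live in the varying classes $\mathcal R_{\zeta_n(0)}$. Making the passage rigorous requires controlling the $L^p$-distance between the class $\mathcal R_{\zeta_n(0)}$ and the limiting class $\mathcal R$ uniformly enough to construct the comparison sequence $\tilde v_n\in\mathcal R$; this is exactly where Lemma~\ref{lem313} is essential, and one must verify that the distance $\min_{v_1\in\mathcal R_{\zeta_n(0)},v_2\in\mathcal R}\|v_1-v_2\|_{L^p(D)}\to 0$ as $\zeta_n(0)\to u_\infty$ strongly. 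A secondary subtlety in part~(ii) is ensuring the intermediate-value/connectedness argument is applied on the whole time line $\mathbb R$ rather than a finite interval, which follows because the gap estimate is uniform in $t$.
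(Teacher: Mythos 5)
Your proposal is correct and follows essentially the same route as the paper: part (i) is proved by a sequential compactness argument in which one uses Lemma \ref{lem313} to build a comparison (``follower'') sequence in the fixed class $\mathcal R$ at distance $o(1)$ from $\zeta_n(t_n)$ — the distance between the classes is controlled precisely because $\zeta_n(0)\in\mathcal R_{\zeta_n(t_n)}$ while the nearby maximizer lies in $\mathcal R$ — and then Proposition \ref{propcomt1} is applied; part (ii) is reduced to (i) by the continuity/connectedness argument you describe. The one point you flag as needing verification (that the inter-class distance tends to zero) is immediate from exactly the observation you already make, so there is no gap.
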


\begin{proof} 
  Since  (ii) can be deduced from (i) via a standard continuity argument, we only give the proof of (i).

 It suffices to show that for any sequence of admissible maps $\{\zeta_n(t)\}$ and any sequence of times $\{t_n\}\subset\mathbb R$,  if
\begin{equation}\label{tt09}
\lim_{n\to\infty}\|\zeta_n(0)-\hat v\|_{L^p(D)}=0
\end{equation}
for some $\hat v\in\mathcal M$,
then, up to a subsequence, there exists some $\tilde v\in\mathcal M$ such that
\begin{equation}\label{tt010}
\lim_{n\to\infty}\|\zeta_n(t_n)-\tilde v\|_{L^p(D)}= 0.
\end{equation}
By \eqref{tt09}, we have that $\lim_{n\to\infty}E(\zeta_n(0))=E(\hat v)=M$. Using the fact that $E$ is conserved for admissible maps, we have that
\begin{equation}\label{gx21}
\lim_{n\to\infty}E(\zeta_n(t_n))=M.
\end{equation}

Next we construct a sequence of ``followers" $\{\eta_n\}\subset\mathcal R$ related to the sequence $\{\zeta_n(t_n)\}$. To be specific, for each positive integer $n$, we can take some  $\eta_n\in\mathcal R$ such that
\[\|\eta_n-\zeta_n(t_n)\|_{L^p(D)}=\min\left\{\|v_1-v_2\|_{L^p(D)}\mid v_1\in\mathcal R,\,v_2\in\mathcal R_{\zeta_n(t_n)}\right\}.\]
Note that the existence of such $\eta_n$ is ensured by Lemma \ref{lem313}. 
 In particular, $\eta_n$ satisfies
\begin{equation}\label{ty12}
 \|\eta_n-\zeta_n(t_n)\|_{L^p(D)}\leq \|\hat v-\zeta_n(0)\|_{L^p(D)},
\end{equation}
since $\hat v\in\mathcal R$ and $\zeta_n(0)\in\mathcal R_{\zeta_n(t_n)}$.
Combining \eqref{tt09} and \eqref{ty12}, we get
\begin{equation}\label{tt121}
\lim_{n\to\infty}\|\eta_n-\zeta_n(t_n)\|_{L^p(D)} =0,
\end{equation}
and thus
\begin{equation}\label{tt08}
\lim_{n\to\infty} E(\eta_n)= M
\end{equation}
by \eqref{gx21}. Besides, it is clear that 
\begin{equation}\label{tt0y8}
\lim_{n\to\infty} I(\eta_n)= \lim_{n\to\infty} I(\zeta_n(t_n))=\lim_{n\to\infty} I(\zeta_n(0))=I(\hat v)=I(w).
\end{equation}
To summarize, we have constructed  a sequence  $\{\eta_n\}\subset\mathcal R$ such that \eqref{tt08} and \eqref{tt0y8} hold.
By  Proposition \ref{propcomt1},  $\eta_n$ converges, up to a subsequence, to some $\tilde v\in\mathcal M$ strongly in $L^p(D)$  as $n\to\infty$, which together with \eqref{tt121} yields \eqref{tt010}.

\end{proof}

\subsection{Application: stability of shear flows with monotone vorticity}

In the following, we present an application of Theorem \ref{bsc}, although it is not directly related to the main results of this paper.

\begin{theorem}
Suppose that $\bar\omega\in L^p(D)$ satisfies
\[\bar\omega=\phi(x_2)\quad\mbox{a.e.\,\,\,in}\,\,\,D \]
for some increasing or decreasing function $\phi:\mathbb R\to\mathbb R\cup\{\pm\infty\}$. The $\bar\omega$ is stable under the Euler dynamics in the following sense: for any $\varepsilon>0,$ there exists some $\delta>0,$ such that for any sufficiently smooth Euler flow with vorticity $\omega(t,\mathbf x)$,  it holds that
    \[
   \|\omega(0,\cdot)-\bar\omega\|_{L^p(D)} <\delta \quad\Longrightarrow\quad \|\omega(t,\cdot)- \bar\omega  \|_{L^p(D)}<\varepsilon\quad \forall\,t\in\mathbb R.
\]
\end{theorem}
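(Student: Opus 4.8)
The plan is to avoid any genuine optimization of $E$ and instead show that, for $w=\bar\omega$, the constrained set $\mathcal R\cap\mathcal I$ already degenerates to the single point $\{\bar\omega\}$; the maximizer set $\mathcal M$ of the variational problem \eqref{mmp0} is then trivially $\{\bar\omega\}$, and the stability follows immediately from the Burton-type criterion in Theorem \ref{bsc}(i). The pivot of the argument is the elementary observation that the impulse is a continuous linear functional carried by the coordinate $x_2$: since $D$ is bounded, $x_2\in L^\infty(D)\subset L^q(D)$ with $q=p/(p-1)$, and $I(v)=\int_D x_2\,v\,d\mathbf x=\langle x_2,v\rangle$ for every $v\in L^p(D)$. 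This casts $I$ into precisely the form handled by Lemma \ref{lem314}, with test function $\xi=x_2$.

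Next I would treat the two monotonicity cases. If $\phi$ is increasing, then $\bar\omega=\phi(x_2)$ is, by Lemma \ref{lem314} applied with $\xi=x_2$, the \emph{unique} maximizer of $\langle x_2,\cdot\rangle=I$ over the weak closure $\bar{\mathcal R}$. If $\phi$ is decreasing, I apply the same lemma with $\xi=-x_2$ and the increasing function $s\mapsto\phi(-s)$, so that $\bar\omega=\phi(x_2)$ is the unique maximizer of $\langle -x_2,\cdot\rangle=-I$, i.e.\ the unique minimizer of $I$ over $\bar{\mathcal R}$. In either case $\bar\omega$ is the unique extremizer of the impulse over $\bar{\mathcal R}$. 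The collapse now follows: because the extreme value of $I$ on $\bar{\mathcal R}$ is attained only at $\bar\omega$, any $v\in\bar{\mathcal R}$ with $I(v)=I(\bar\omega)$ must equal $\bar\omega$ (otherwise it would be a second extremizer). Hence $\bar{\mathcal R}\cap\mathcal I=\{\bar\omega\}$, and a fortiori $\mathcal R\cap\mathcal I=\{\bar\omega\}$; since $E$ is maximized over the singleton $\mathcal R\cap\mathcal I$ at $\bar\omega$, we conclude $\mathcal M=\{\bar\omega\}$.

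Finally I would invoke Theorem \ref{bsc}(i). Every sufficiently smooth Euler flow $\omega(t,\cdot)$ is a continuous admissible map by the conservation laws (C1)--(C3), so the criterion applies with $\mathcal M=\{\bar\omega\}$; there the distance $\min_{v\in\mathcal M}\|\zeta(t)-v\|_{L^p(D)}$ is just $\|\omega(t,\cdot)-\bar\omega\|_{L^p(D)}$, and the assertion reads exactly as the claimed $L^p$ stability estimate. I expect no serious technical obstacle in this proof; the one point that must be seen clearly—and which I would regard as the heart of the matter—is that the impulse constraint together with the monotone dependence on $x_2$ pins $\bar\omega$ down uniquely \emph{within its rearrangement class}, so that the energy maximization is vacuous and the general machinery of Proposition \ref{propcomt1} and Theorem \ref{bsc} is only needed to transport this rigidity to the dynamics. (Note in particular that the positivity and weak continuity of $E$ play no role here beyond what is already packaged into Theorem \ref{bsc}; the whole content is the rearrangement-theoretic identity $\bar{\mathcal R}\cap\mathcal I=\{\bar\omega\}$.)
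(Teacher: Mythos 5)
Your proposal is correct and follows essentially the same route as the paper: apply Lemma \ref{lem314} with $\xi=x_2$ (or handle the decreasing case by a sign flip) to see that $\bar\omega$ is the unique extremizer of $I$ over its rearrangement class, conclude $\mathcal R_{\bar\omega}\cap\mathcal I_{\bar\omega}=\{\bar\omega\}$ so that $\mathcal M_{\bar\omega}=\{\bar\omega\}$, and then invoke Theorem \ref{bsc}. The only cosmetic difference is that the paper reduces the decreasing case to the increasing one by considering $-\bar\omega$ rather than by using $\xi=-x_2$.
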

\begin{proof}
Without loss of generality, we assume that $\phi$ is increasing; otherwise, we can consider $-\bar{\omega}$.
Applying Lemma \ref{lem314} (choosing $\xi=x_2$ and $\tilde w=\bar\omega$ therein), we see that $\bar\omega$ is the unique maximizer of $I$ relative to $\mathcal R_{\bar\omega}$, which implies that
\[\mathcal R_{\bar\omega}\cap \mathcal I_{\bar\omega}=\{\bar\omega\}.\]
In particular, $\bar\omega$ is the unique maximizer of $E$ relative to $\mathcal R_{\bar\omega}\cap\mathcal I_{\bar\omega}$. The desired stability then follows from Theorem \ref{bsc}.
\end{proof}

\section{Proof of Theorem \ref{thm1}}\label{sec4}

We begin with some preliminary discussions before presenting the proof.
Redefine the values of $g$ outside the interval $\left[\min_{\bar D}g'(\bar\psi-\lambda x_2),\max_{\bar D}g'(\bar\psi-\lambda x_2)\right]$ such that  
 \begin{equation}\label{aspg1}
  g' \equiv c_0\quad\forall\,s\in \left(-\infty,\min_{\bar D}g'(\bar\psi-\lambda x_2)-1\right]\cup \left[\max_{\bar D}g'(\bar\psi-\lambda x_2)+1,+\infty\right) 
 \end{equation}
for some positive number $c_0,$ and
 \begin{equation}\label{aspg2}
 0\leq g'(s)\leq \Lambda_1-\epsilon\quad\forall\,s\in\mathbb R
 \end{equation}
for some sufficiently small positive number $\epsilon$. This can be achieved by repeating the proof of  \cite[Lemma 2.4]{WTAMS}. By \eqref{aspg1},
\begin{equation}\label{texp}
G(s+\tau)\leq G(s)+ g(s)\tau+\frac{1}{2}(\Lambda_1-\epsilon)\tau^2\quad\forall\,
s,\tau\in\mathbb R.
\end{equation}
Let $G$ be an antiderivative of $g$ (for example, one can take $G(s)=\int_0^sg(\tau) d\tau$). Denote by $\hat G$ the Legendre transform  of $G$, i.e.,
\[\hat G(s):=\sup_{\tau\in\mathbb R}\left(s\tau-G(\tau)\right).\]
The condition \eqref{aspg1} ensures that   $\hat G$
 is a real-valued, locally Lipschitz function (cf. \cite[Lemma 2.3]{WTAMS}). It is clear according to the definition of Legendre transform that
\begin{equation}\label{hatg}
\hat G(s)+G(\tau)\geq s\tau\quad\forall\,s,\tau\in\mathbb R,
\end{equation}
and the inequality is an equality if and only if $s=g(\tau)$. Note that the introduction of the Legendre transform here is mainly inspired by Wolansky and Ghil's papers \cite{WG1,WG2}.

Now we turn to the proof. Recall that $\mathcal M_{\bar\omega}$ is the set of maximizers of \eqref{mmp}.

 \begin{proposition}\label{keyprop1}
Suppose that $\bar\omega$ satisfies the conditions of Theorem \ref{thm1}. Then  
\begin{equation}\label{mstr}
\mathcal M_{\bar\omega}=\{\bar\omega\}.
\end{equation}
\end{proposition}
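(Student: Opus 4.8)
The plan is to prove \eqref{mstr} by showing directly that $E(v)\le E(\bar\omega)$ for every competitor $v\in\mathcal R_{\bar\omega}\cap\mathcal I_{\bar\omega}$, with equality if and only if $v=\bar\omega$; since $\bar\omega$ itself lies in $\mathcal R_{\bar\omega}\cap\mathcal I_{\bar\omega}$ (and a maximizer exists by Proposition \ref{propcomt1}), this forces $M_{\bar\omega}=E(\bar\omega)$ and $\mathcal M_{\bar\omega}=\{\bar\omega\}$. Write $\bar\phi:=\bar\psi-\lambda x_2$, so that \eqref{tvs1} reads $\bar\omega=g(\bar\phi)$, and set $w:=v-\bar\omega$. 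Because $v$ and $\bar\omega$ are equimeasurable, $\int_D w\,d\mathbf x=0$, hence $w\in\mathring L^2(D)$; and because $v\in\mathcal I_{\bar\omega}$, we have $I(w)=0$. Expanding the quadratic functional $E$ and using the symmetry of $\mathcal G$, I first record the exact identity
\[
E(v)-E(\bar\omega)=\int_D w\,\mathcal G\bar\omega\,d\mathbf x+\frac12\int_D w\,\mathcal Gw\,d\mathbf x=\int_D w\,\bar\phi\,d\mathbf x+\frac12\int_D w\,\mathcal Tw\,d\mathbf x.
\]
Here $\mathcal G\bar\omega$ and $\bar\phi$ differ only by a constant multiple of $x_2$ (both equal $\bar\psi$ minus a linear function of $x_2$), which is annihilated against $w$ by $I(w)=0$, so $\mathcal G\bar\omega$ may be replaced by $\bar\phi$; and $\int_D w\,d\mathbf x=0$ lets me replace $\mathcal G$ by $\mathcal T$ in the quadratic term.

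Next I would estimate the two terms from opposite sides. For the quadratic term, the energy--enstrophy inequality in the remark following Lemma \ref{lm022} gives $\tfrac12\int_D w\,\mathcal Tw\,d\mathbf x\le\tfrac{1}{2\Lambda_1}\int_D w^2\,d\mathbf x$. For the linear term I exploit the profile relation $\bar\omega=g(\bar\phi)$ through the Legendre transform $\hat G$. The bound \eqref{aspg2} (equivalently the quadratic estimate \eqref{texp}) says that $g$ is nondecreasing and $(\Lambda_1-\epsilon)$-Lipschitz, so by convex duality $\hat G$ is $(\Lambda_1-\epsilon)^{-1}$-strongly convex, while the equality case of \eqref{hatg} shows $\bar\phi\in\partial\hat G(\bar\omega)$. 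The resulting subgradient inequality is
\[
\hat G(v)\ge\hat G(\bar\omega)+\bar\phi\,(v-\bar\omega)+\frac{1}{2(\Lambda_1-\epsilon)}(v-\bar\omega)^2\qquad\text{a.e. in }D.
\]
Integrating and using that $\int_D\hat G(v)\,d\mathbf x=\int_D\hat G(\bar\omega)\,d\mathbf x$ (a Casimir, hence constant on the rearrangement class $\mathcal R_{\bar\omega}$, and finite since $\bar\omega=-\Delta\bar\psi\in C(\bar D)$ and therefore $v$ are bounded) yields
\[
0\ge\int_D w\,\bar\phi\,d\mathbf x+\frac{1}{2(\Lambda_1-\epsilon)}\int_D w^2\,d\mathbf x.
\]

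Combining the two estimates gives
\[
E(v)-E(\bar\omega)\le\Big(\frac{1}{2\Lambda_1}-\frac{1}{2(\Lambda_1-\epsilon)}\Big)\int_D w^2\,d\mathbf x=-\frac{\epsilon}{2\Lambda_1(\Lambda_1-\epsilon)}\int_D w^2\,d\mathbf x\le 0,
\]
with equality only if $w\equiv 0$, i.e. $v=\bar\omega$. This establishes \eqref{mstr}.

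The step I expect to be the main obstacle is the linear-term estimate: one must convert the pointwise smoothness bound \eqref{texp}/\eqref{aspg2} on $g$ into the strongly convex lower bound for $\hat G$ and justify the membership $\bar\phi\in\partial\hat G(\bar\omega)$ even at points where $g'$ vanishes, so that $g$ fails to be invertible and $\hat G$ may develop corners. This is precisely where the \emph{strict} inequality $\max_{\bar D}g'(\bar\phi)<\Lambda_1$ is consumed, producing the strictly negative coefficient above and hence uniqueness. Under only $\max_{\bar D}g'(\bar\phi)\le\Lambda_1$ (the setting of Theorem \ref{thm2}) this coefficient degenerates to $\le 0$ and equality can persist along $\mathbb E_1$, which is exactly why the non-shear component survives there; I would therefore expect the present strict argument to collapse cleanly into the borderline analysis needed for the second theorem.
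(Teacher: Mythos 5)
Your proof is correct and, while it rests on the same two ingredients as the paper's (the Legendre-transform Casimir $\int_D\hat G$ together with the first-eigenvalue inequality for $\mathcal T$), it assembles them differently. The paper applies the Fenchel--Young inequality with the \emph{nonlocal} test point $\bar\psi-\lambda x_2+\mathcal T\varrho$ and then the Taylor bound \eqref{texp}, arriving at $E(\bar\omega)-E(\bar\omega+\varrho)\ge\frac12\int_D\varrho\mathcal T\varrho-(\Lambda_1-\epsilon)(\mathcal T\varrho)^2\,d\mathbf x$, which it closes with the Poincar\'e-type inequality \eqref{vtv}; the resulting coercivity is in $\|\mathcal T\varrho\|_{L^2(D)}$. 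You instead shift by the \emph{pointwise} quantity $w/(\Lambda_1-\epsilon)$ --- i.e.\ you invoke the $1/(\Lambda_1-\epsilon)$-strong convexity of $\hat G$, which does follow from $0\le g'\le\Lambda_1-\epsilon$ and survives the corners of $\hat G$ where $g'$ vanishes (strong monotonicity of the inverse graph only needs $g$ nondecreasing and $(\Lambda_1-\epsilon)$-Lipschitz, and the subgradient relation $\bar\psi-\lambda x_2\in\partial\hat G(\bar\omega)$ is exactly the equality case of \eqref{hatg}) --- and you close with the dual, energy--enstrophy form $\int_Dw\mathcal Tw\,d\mathbf x\le\Lambda_1^{-1}\int_Dw^2\,d\mathbf x$ from the remark after Lemma \ref{lm022}. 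Your version buys a quantitatively stronger conclusion, namely coercivity $E(\bar\omega)-E(v)\ge\frac{\epsilon}{2\Lambda_1(\Lambda_1-\epsilon)}\|w\|_{L^2(D)}^2$ versus the paper's $\frac{\epsilon}{2}\|\mathcal Tw\|_{L^2(D)}^2$, and a cleaner separation of the linear and quadratic terms; the paper's version has the advantage that it transfers verbatim to the borderline case $\epsilon=0$ in Proposition \ref{keyprop2}, where the equality case of \eqref{vtv} identifies the degenerate directions with $\mathbb E_1$ (your route adapts as well, via the equality case of the energy--enstrophy inequality). Your closing diagnosis of where the strict inequality is consumed matches the paper exactly.
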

\begin{proof}

Fix  $\varrho\neq 0$ such that $\bar\omega+\varrho\in\mathcal R_{\bar\omega}\cap\mathcal I_{\bar\omega}$. Our aim is to show that $E(\bar\omega)>E(\bar\omega+\varrho).$
By a straightforward computation,
\begin{equation}\label{emsm}
E (\bar\omega)-E (\bar\omega+\varrho)
=-\int_D\varrho\mathcal G\bar\omega d\mathbf x-\frac{1}{2}\int_D\varrho\mathcal G\varrho d\mathbf x.
\end{equation}
 To proceed, we observe that $\varrho$ satisfies the following  constraints:
\begin{equation}\label{rhoc}
  \int_D\varrho d\mathbf x=0,\quad I(\varrho)=0,\quad \int_D\hat G(\bar\omega+\varrho) d\mathbf x= \int_D\hat G(\bar\omega) d\mathbf x.
\end{equation}
Based on \eqref{rhoc}, we compute as follows (recall that $\mathcal T$ is defined by \eqref{defott}):
\begin{equation}\label{casicom}
\begin{split}
0=&\int_D\hat G(\bar\omega+\varrho) d\mathbf x-\int_D\hat G(\bar\omega) d\mathbf x\\
\geq &\int_D(\bar\omega+\varrho)(\bar\psi-\lambda x_2+\mathcal T\varrho) d\mathbf x-\int_DG(\bar\psi-\lambda x_2+\mathcal T\varrho) d\mathbf x-\int_D\bar\omega(\bar\psi-\lambda x_2)d\mathbf x\\
&+\int_DG(\bar\psi-\lambda x_2)d\mathbf x\\
=&\int_D\bar\omega\mathcal T\varrho+\varrho\bar\psi+\varrho\mathcal T\varrho d\mathbf x-\int_DG(\bar\psi-\lambda x_2+\mathcal T\varrho)-G(\bar\psi-\lambda x_2) d\mathbf x\\
\geq& \int_D\bar\omega\mathcal T\varrho+\varrho\bar\psi +\varrho\mathcal T\varrho  d\mathbf x-\int_Dg(\bar\psi-\lambda x_2)\mathcal T\varrho +\frac{1}{2}(\Lambda_1-\epsilon) (\mathcal T\varrho)^2 d\mathbf x\\
=& \int_D\varrho\bar\psi +\varrho\mathcal T\varrho  d\mathbf x-\frac{1}{2}\int_D (\Lambda_1-\epsilon) (\mathcal T\varrho)^2 d\mathbf x\\
=& \int_D\varrho\mathcal G\bar\omega  +\varrho\mathcal T\varrho  d\mathbf x-\frac{1}{2}\int_D (\Lambda_1-\epsilon) (\mathcal T\varrho)^2 d\mathbf x,
\end{split}
\end{equation}
where we have used \eqref{hatg} in the first inequality and \eqref{texp} in the second inequality.
 So we have proved that 
 \begin{equation}\label{keyineq}
 -\int_D\varrho \mathcal G\bar\omega d\mathbf x -\frac{1}{2}\int_D\varrho\mathcal T\varrho d\mathbf x \geq \frac{1}{2}\int_D\varrho\mathcal T\varrho d\mathbf x-\frac{1}{2}\int_D (\Lambda_1-\epsilon) (\mathcal T\varrho)^2 d\mathbf x.
 \end{equation}
 Inserting \eqref{keyineq} into \eqref{emsm} and applying Lemma \ref{lm022}, we get  
\[
E (\bar\omega)-E (\bar\omega+\varrho)\geq \frac{1}{2}\int_D\varrho\mathcal T\varrho-(\Lambda_1-\epsilon) (\mathcal T\varrho)^2d\mathbf x  \geq \frac{1}{2}\epsilon\int_D  (\mathcal T\varrho)^2d\mathbf x>0.\]
Hence the proof is complete.
 \end{proof}

\begin{proof}[Proof of Theorem \ref{thm1}]
The stability assertion (ii) follows directly as a corollary of Theorem \ref{bsc} and Proposition \ref{keyprop1}. It remains to prove the rigidity assertion (i). Observe that $E$, $\mathcal R_{\bar\omega}$ and $\mathcal I_{\bar\omega}$ are all invariant under translations along the $x_1$-direction, i.e., for any $\alpha\in\mathbb R,$
  \[E(v)=E(v(\cdot+\alpha\mathbf e_1)),\quad
   v\in\mathcal R_{\bar\omega}\,\,\Longleftrightarrow\,\, v(\cdot+\alpha\mathbf e_1)\in \mathcal R_{\bar\omega},\quad v\in\mathcal I_{\bar\omega}\,\,\Longleftrightarrow\,\,  v(\cdot+\alpha\mathbf e_1)\in \mathcal I_{\bar\omega}.\]
 Hence $\mathcal M_{\bar\omega}$ is also invariant under translations along the $x_1$-direction, i.e.,
  \begin{equation}\label{minva}
  v\in\mathcal M_{\bar\omega}\quad\Longrightarrow\quad v(\cdot+\alpha\mathbf e_1)\in \mathcal M_{\bar\omega}\quad\forall\,\alpha\in\mathbb R.
  \end{equation}
  The desired result then follows immediately from Proposition \ref{keyprop1}.
\end{proof}

\section{Proof of Theorem \ref{thm2}} \label{sec5}

\begin{proposition}\label{keyprop2}
Suppose that $\bar\omega$ satisfies the conditions of Theorem \ref{thm2}. Then  
\[\mathcal M_{\bar\omega}=\left\{ \bar\omega+\varrho\mid  \varrho+\bar\omega\in  \mathcal R_{\bar\omega},\, \varrho\in\mathbb E_1,\, I(\varrho)=0,\,\mathsf I_{\bar\omega}\mathsf I_{\mathcal G\varrho}=0\right\}.\]
In particular, it holds that 
 \begin{equation}\label{inpr}
 \mathcal M_{\bar\omega}\subset(\bar\omega+\mathbb E_1)\cap\mathcal R_{\bar\omega}.
 \end{equation}
 \end{proposition}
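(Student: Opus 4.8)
The plan is to run the argument of Proposition \ref{keyprop1} at the critical threshold and then read off the precise equality conditions. First I would redefine $g$ exactly as in Section \ref{sec4}, except that the bound \eqref{aspg2} is now taken in the non-strict form $0\le g'(s)\le\Lambda_1$ for all $s\in\mathbb R$ (this is all that the hypothesis \eqref{gcond2} permits), so that the Taylor-type estimate \eqref{texp} holds with $\Lambda_1$ in place of $\Lambda_1-\epsilon$. Repeating the computation \eqref{casicom}--\eqref{keyineq} verbatim with this single change, and then combining with \eqref{emsm} and Lemma \ref{lm022}, yields for every $\varrho$ with $\bar\omega+\varrho\in\mathcal R_{\bar\omega}\cap\mathcal I_{\bar\omega}$ the chain
\[
E(\bar\omega)-E(\bar\omega+\varrho)\ \ge\ \tfrac12\Big(\int_D\varrho\,\mathcal T\varrho\,d\mathbf x-\Lambda_1\int_D(\mathcal T\varrho)^2\,d\mathbf x\Big)\ \ge\ 0.
\]
In particular $\bar\omega$ is itself a maximizer of \eqref{mmp}, so $M_{\bar\omega}=E(\bar\omega)$; and, by the equality statement in Lemma \ref{lm022}, the second inequality is an equality if and only if $\varrho\in\mathbb E_1$.

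Next I would establish the inclusion ``$\subseteq$'' of the claimed identity. If $\bar\omega+\varrho\in\mathcal M_{\bar\omega}$, then membership in $\mathcal R_{\bar\omega}\cap\mathcal I_{\bar\omega}$ gives at once $\int_D\varrho\,d\mathbf x=0$ (equal $L^1$ data) and $I(\varrho)=0$, while the maximality $E(\bar\omega+\varrho)=M_{\bar\omega}=E(\bar\omega)$ squeezes the displayed chain to equalities. Hence $\int_D\varrho\,\mathcal T\varrho\,d\mathbf x=\Lambda_1\int_D(\mathcal T\varrho)^2\,d\mathbf x$, and therefore $\varrho\in\mathbb E_1$ by Lemma \ref{lm022}.

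The crux is then an exact evaluation of $E$ on $\mathbb E_1$. For $\varrho\in\mathbb E_1$ one has $\varrho=\Lambda_1\mathcal T\varrho$, so by \eqref{defott} $\mathcal G\varrho=\mathcal T\varrho+\mathsf I_{\mathcal G\varrho}=\Lambda_1^{-1}\varrho+\mathsf I_{\mathcal G\varrho}$, while $\int_D\varrho\,d\mathbf x=0$. Feeding these into \eqref{emsm}, together with the rearrangement identity $\int_D(\bar\omega+\varrho)^2\,d\mathbf x=\int_D\bar\omega^2\,d\mathbf x$ (equivalently $\int_D\bar\omega\varrho\,d\mathbf x=-\tfrac12\int_D\varrho^2\,d\mathbf x$, valid since $\bar\omega+\varrho\in\mathcal R_{\bar\omega}$ and $\bar\omega$ is bounded), a short computation collapses everything to the identity
\[
E(\bar\omega+\varrho)-E(\bar\omega)=LH\,\mathsf I_{\bar\omega}\,\mathsf I_{\mathcal G\varrho}.
\]
This finishes both directions. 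For necessity, the left-hand side vanishes for a maximizer, so $\mathsf I_{\bar\omega}\mathsf I_{\mathcal G\varrho}=0$, completing ``$\subseteq$''. For sufficiency, if $\varrho\in\mathbb E_1$ with $\bar\omega+\varrho\in\mathcal R_{\bar\omega}$, $I(\varrho)=0$ and $\mathsf I_{\bar\omega}\mathsf I_{\mathcal G\varrho}=0$, the identity gives $E(\bar\omega+\varrho)=E(\bar\omega)=M_{\bar\omega}$ while $\bar\omega+\varrho\in\mathcal R_{\bar\omega}\cap\mathcal I_{\bar\omega}$, so $\bar\omega+\varrho\in\mathcal M_{\bar\omega}$. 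The inclusion \eqref{inpr} is then immediate.

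I expect the main obstacle to be the borderline nature of \eqref{gcond2}: at $\max g'=\Lambda_1$ the strict positivity of the gap that forced uniqueness in Theorem \ref{thm1} is lost, so the entire content now sits in the equality analysis. The delicate ingredient is the exact identity on $\mathbb E_1$: the mean-value term $LH\,\mathsf I_{\bar\omega}\mathsf I_{\mathcal G\varrho}$ is invisible in the strict case but does \emph{not} vanish for eigenmodes with nonzero boundary trace (for instance $\cos(2\pi x_2/H)$, whose potential $\mathcal G\varrho$ has nonzero mean), and it is precisely this term that produces the extra constraint $\mathsf I_{\bar\omega}\mathsf I_{\mathcal G\varrho}=0$. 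Keeping careful track of this mean-value contribution, rather than discarding it as one may in the definite case, is the heart of the argument.
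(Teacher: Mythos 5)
Your proposal is correct and follows essentially the same route as the paper: relax \eqref{aspg2} to the non-strict bound $0\le g'\le\Lambda_1$, rerun \eqref{casicom}--\eqref{keyineq} to get $E(\bar\omega)-E(\bar\omega+\varrho)\ge\frac12\bigl(\int_D\varrho\,\mathcal T\varrho\,d\mathbf x-\Lambda_1\int_D(\mathcal T\varrho)^2\,d\mathbf x\bigr)\ge 0$, identify $\mathbb E_1$ via the equality case of Lemma \ref{lm022}, and then evaluate the energy difference on $\mathbb E_1$ using $\mathcal G\varrho=\Lambda_1^{-1}\varrho+\mathsf I_{\mathcal G\varrho}$ and $\int_D\bar\omega\varrho\,d\mathbf x=-\tfrac12\int_D\varrho^2\,d\mathbf x$ to extract the residual term $LH\,\mathsf I_{\bar\omega}\mathsf I_{\mathcal G\varrho}$, exactly as in the paper's computation \eqref{csble}.
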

\begin{proof}
Fix $\varrho$ such that $\bar\omega+\varrho\in\mathcal R_{\bar\omega}\cap \mathcal I_{\bar\omega}$. As in Section \ref{sec4}, we can assume that $g$ satisfies \eqref{aspg1} and \eqref{aspg2} with $\epsilon=0$.
 Repeating the arguments \eqref{emsm}-\eqref{keyineq}, we can prove that
\begin{equation}\label{egeq2}
E (\bar\omega)-E (\bar\omega+\varrho)\geq \frac{1}{2}\int_D\varrho\mathcal T\varrho-\Lambda_1   (\mathcal T\varrho)^2d\mathbf x\geq 0.
\end{equation}
Hence $\bar\omega\in\mathcal M_{\bar\omega}$. Moreover, by Lemma \ref{lm022}, if $E (\bar\omega)=E (\bar\omega+\varrho)$, then necessarily $\varrho\in\mathbb E_1.$

To complete the proof, it suffices to show that for any $\varrho$ satisfying $\bar\omega+\varrho\in  \mathcal R_{\bar\omega}\cap\mathcal I_{\bar\omega}$,  
  \[
  E (\bar\omega)=E (\bar\omega+\varrho)\quad\Longleftrightarrow\quad
 \varrho\in\mathbb E_1\,\, {\rm and }\,\,\mathsf I_{\bar\omega}\mathsf I_{\mathcal G\varrho}=0.\]
First we prove ``$\Longrightarrow$". Suppose that $E(\bar\omega)=E(\bar\omega+\varrho)$. Then
 $\varrho\in\mathbb E_1$, and  
\begin{equation}\label{csble}
\begin{split}
0=&E(\bar\omega)-E(\bar\omega+\varrho)\\
 =&-\int_D\bar\omega \mathcal G\varrho d\mathbf x-\frac{1}{2}\int_D\varrho\mathcal G\varrho d\mathbf x \\
=&-\int_D\bar\omega \mathcal T\varrho d\mathbf x-\frac{1}{2}\int_D\varrho\mathcal T\varrho d\mathbf x -\mathsf I_{\mathcal G\varrho}\int_D\bar\omega   d\mathbf x\\
 =&-\frac{1}{\Lambda_1}\int_D\bar\omega  \varrho d\mathbf x-\frac{1}{2}\int_D\varrho\mathcal T\varrho d\mathbf x -LH \mathsf I_{\bar\omega} \mathsf I_{\mathcal G\varrho} \\
=& \frac{1}{2\Lambda_1}\int_D   \varrho^2 d\mathbf x-\frac{1}{2}\int_D\varrho\mathcal T\varrho d\mathbf x -LH \mathsf I_{\bar\omega} \mathsf I_{\mathcal G\varrho} \\
 =& -LH \mathsf I_{\bar\omega}\mathsf I_{\mathcal G\varrho},
\end{split}
\end{equation}
which yields $\mathsf I_{\bar\omega} \mathsf I_{\mathcal G\varrho}=0$.
Note that we have used the following equality in \eqref{csble}:
\[- \int_D\bar\omega  \varrho d\mathbf x=\frac{1}{2}\int_D   \varrho^2 d\mathbf x,\]
which follows from  $\|\bar\omega+\varrho\|_{L^2(D)}=\|\bar\omega\|_{L^2(D)}.$
Conversely, if $\varrho\in\mathbb E_1$ and $\mathsf I_{\bar\omega}\mathsf I_{\mathcal G\varrho}=0$, then \eqref{csble} still holds, which gives  $E(\bar\omega)=E(\bar\omega+\varrho)$.
 The proof is complete.
\end{proof}

\begin{lemma}\label{lem422}
There exists some $\bar\omega^e\in\mathbb E_1$ and some  $\bar\omega^s$  depending only on $x_2$ such that
 \[\bar\omega=\bar\omega^e+\bar\omega^s.\]
 
 \end{lemma}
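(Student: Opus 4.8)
The plan is to exploit the translational symmetry of the maximization problem \eqref{mmp} together with the inclusion \eqref{inpr} just established in Proposition \ref{keyprop2}. First I would observe that, exactly as in the proof of Theorem \ref{thm1}, the functional $E$ and the constraint sets $\mathcal R_{\bar\omega}$ and $\mathcal I_{\bar\omega}$ are all invariant under translations in the $x_1$-direction; hence so is the maximizer set $\mathcal M_{\bar\omega}$, so that \eqref{minva} holds verbatim under the hypotheses of Theorem \ref{thm2}. Since $\bar\omega\in\mathcal M_{\bar\omega}$ by Proposition \ref{keyprop2}, this gives $\bar\omega(\cdot+\alpha\mathbf e_1)\in\mathcal M_{\bar\omega}$ for every $\alpha\in\mathbb R$, and combining with \eqref{inpr} yields the crucial relation
\[
\bar\omega(\cdot+\alpha\mathbf e_1)-\bar\omega\in\mathbb E_1\qquad\forall\,\alpha\in\mathbb R.
\]

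Next I would differentiate this relation in $\alpha$ at $\alpha=0$. Because $g\in C^1$ and $\bar\psi\in C^2(\bar D)$, the vorticity $\bar\omega=g(\bar\psi-\lambda x_2)$ belongs to $C^1(\bar D)$, so the difference quotients $\alpha^{-1}(\bar\omega(\cdot+\alpha\mathbf e_1)-\bar\omega)$ converge to $\partial_{x_1}\bar\omega$ in $L^2(D)$ as $\alpha\to0$. Each quotient lies in the finite-dimensional, hence closed, subspace $\mathbb E_1$, so the limit does too, giving $\partial_{x_1}\bar\omega\in\mathbb E_1$.

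Then I would use $x_1$-periodicity to pin down the form of $\partial_{x_1}\bar\omega$. Inspecting the bases in \eqref{hl1}--\eqref{hl3}, the only elements of $\mathbb E_1$ carrying $x_1$-dependence are $\cos(2\pi x_1/L)\sin(\pi x_2/H)$ and $\sin(2\pi x_1/L)\sin(\pi x_2/H)$, while $\sin(2\pi x_2/H)$ and $\cos(2\pi x_2/H)$ depend only on $x_2$. Averaging $\partial_{x_1}\bar\omega$ over $x_1\in\mathbb T_L$ gives zero by periodicity, and since the product modes average to zero, the linear independence of $\sin(2\pi x_2/H),\cos(2\pi x_2/H)$ forces the coefficients of these two $x_2$-only modes to vanish. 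Consequently, in all three cases of Proposition \ref{propevp},
\[
\partial_{x_1}\bar\omega=\sin\!\left(\tfrac{\pi x_2}{H}\right)\!\left[c\cos\!\left(\tfrac{2\pi x_1}{L}\right)+d\sin\!\left(\tfrac{2\pi x_1}{L}\right)\right]
\]
for some constants $c,d$. Integrating in $x_1$ produces $\bar\omega=\bar\omega^e+\bar\omega^s$, where $\bar\omega^e:=\sin(\pi x_2/H)[\tfrac{cL}{2\pi}\sin(2\pi x_1/L)-\tfrac{dL}{2\pi}\cos(2\pi x_1/L)]\in\mathbb E_1$ and the integration ``constant'' $\bar\omega^s$ depends only on $x_2$; this is precisely the claimed decomposition.

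The step I expect to be most delicate is justifying $\partial_{x_1}\bar\omega\in\mathbb E_1$, since it combines the $C^1$-regularity of $\bar\omega$ (needed to realize the derivative as an $L^2$-limit of translates) with the closedness of $\mathbb E_1$ (automatic from finite-dimensionality); once this is in place, the remainder is elementary Fourier bookkeeping. As a minor caveat I would double-check the degenerate regime $H/L>\sqrt3/2$, where $\mathbb E_1$ contains no $x_1$-dependent modes: there the averaging argument forces $\partial_{x_1}\bar\omega\equiv0$, so $\bar\omega$ depends only on $x_2$ and one simply takes $\bar\omega^e=0$.
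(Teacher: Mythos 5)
Your proposal is correct and follows essentially the same route as the paper: translation invariance of $\mathcal M_{\bar\omega}$ combined with the inclusion \eqref{inpr} gives $\partial_{x_1}\bar\omega\in\mathbb E_1$, after which one integrates back in $x_1$. The only (minor) difference is how the $x_2$-only modes $\sin(2\pi x_2/H),\cos(2\pi x_2/H)$ are ruled out of $\partial_{x_1}\bar\omega$: you average over one period in $x_1$ and invoke periodicity, whereas the paper first integrates (producing terms linear in $x_1$) and then uses the $L^\infty$-boundedness of the translates $\bar\omega(\cdot+\alpha\mathbf e_1)\in\mathcal R_{\bar\omega}$ to kill those terms; both are valid, and your justification of $\partial_{x_1}\bar\omega\in\mathbb E_1$ via $L^2$-limits of difference quotients is actually more explicit than the paper's.
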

 \begin{proof}
 Since $\mathcal M_{\bar\omega}$ is invariant under translations along the $x_1$-direction (cf. \eqref{minva}), it holds that \[\bar\omega(\cdot+\alpha\mathbf e_1)\in\mathcal M_{\bar\omega}\quad\forall\,  \alpha\in\mathbb R.\]
  Taking into account   \eqref{inpr}, we have that 
   \[\bar\omega(\cdot+\alpha\mathbf e_1)-\bar\omega\in\mathbb E_1\quad\forall\,\alpha\in\mathbb R,
    \]
  which implies that $\partial_{x_1}\bar\omega\in\mathbb E_1.$

To proceed, we distinguish three cases:

\noindent {\bf Case 1:} If $H/L>\sqrt{3}/2$, then, by \eqref{hl1},
  \begin{equation}\label{fenj1}
  \bar\omega =x_1\left[a\sin\left(\frac{2\pi x_2}{H}\right)+b\cos\left(\frac{2\pi x_2}{H}\right)\right]+\bar\omega^s
  \end{equation}
  for some $a,b\in\mathbb R$ and some $\bar\omega^s$ that depends only on $x_2$. On the other hand, we know that $\bar\omega(\cdot+\alpha\mathbf e_1)\in\mathcal R_{\bar\omega}$ for any $\alpha\in\mathbb R$,
  which implies that
  \begin{equation}\label{fenj2}
  \sup_{\alpha\in\mathbb R}\|\bar\omega(\cdot+\alpha\mathbf e_1)\|_{L^\infty(D)}<\infty.
  \end{equation}
Combining \eqref{fenj1} and \eqref{fenj2}, we deduce that $a=b=0.$

\noindent {\bf Case 2:} If $H/L<\sqrt{3}/2$, then, by \eqref{hl2},
  \[\bar\omega = a\cos\left(\frac{2\pi x_1}{L}\right)\sin\left(\frac{\pi x_2}{H}\right)+b\sin\left(\frac{2\pi x_1}{L}\right)\sin\left(\frac{\pi x_2}{H}\right) +\bar\omega^s\]
  for some $a,b\in\mathbb R$ and some $\bar\omega^s$  that depends  only on $x_2$.

\noindent {\bf Case 3:} If $H/L=\sqrt{3}/2$, then, by \eqref{hl3},
  \[\bar\omega = x_1\left[a\sin\left(\frac{2\pi x_2}{H}\right)+b\cos\left(\frac{2\pi x_2}{H}\right)\right]+c\cos\left(\frac{2\pi x_1}{L}\right)\sin\left(\frac{\pi x_2}{H}\right)+d\sin\left(\frac{2\pi x_1}{L}\right)\sin\left(\frac{\pi x_2}{H}\right) +\bar\omega^s\]
  for some $a,b,c,d\in\mathbb R$ and some $\bar\omega^s$  that depends  only on $x_2$. Repeating the argument as in {\bf Case 1}, we get $a=b=0.$ 
 
Therefore the desired result holds in all three cases, which completes the proof.
  
    \end{proof}

\begin{lemma}\label{lem423}
  $\mathcal O_{\bar\omega}$ is an isolated subset of $\mathcal M_{\bar\omega}.$
\end{lemma}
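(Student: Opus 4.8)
The plan is to exploit the fact, recorded in \eqref{inpr} of Proposition \ref{keyprop2}, that $\mathcal M_{\bar\omega}\subset\bar\omega+\mathbb E_1$ lies in a \emph{finite-dimensional} affine space, on which the $L^p(D)$ distance is equivalent to the Euclidean distance between $\mathbb E_1$-coefficients; hence I may freely replace $L^p$ by $L^2$. Since $\mathcal M_{\bar\omega}$ is compact (Proposition \ref{propcomt1}) and $\mathcal O_{\bar\omega}\subset\mathcal M_{\bar\omega}$ is compact, it suffices to prove that $\mathcal O_{\bar\omega}$ is \emph{relatively open} in $\mathcal M_{\bar\omega}$: then $\mathcal M_{\bar\omega}\setminus\mathcal O_{\bar\omega}$ is compact and disjoint from $\mathcal O_{\bar\omega}$, forcing $\min_{v_1\in\mathcal O_{\bar\omega},\,v_2\in\mathcal M_{\bar\omega}\setminus\mathcal O_{\bar\omega}}\|v_1-v_2\|_{L^p(D)}>0$ (the separation being $+\infty$ when the difference set is empty).

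To set up coordinates adapted to the orbit, I write $\bar\omega=\bar\omega^e+\bar\omega^s$ as in Lemma \ref{lem422} and $v=\bar\omega+\varrho$ with $\varrho\in\mathbb E_1$. First I use the constraint $I(\varrho)=0$ to eliminate the $\sin(2\pi x_2/H)$ direction from $\varrho$: since $\int_0^H x_2\sin(2\pi x_2/H)\,dx_2=-H^2/(2\pi)\neq0$ while $\int_0^H x_2\cos(2\pi x_2/H)\,dx_2=0$, and the $x_1$-oscillating modes of $\mathbb E_1$ have vanishing impulse (because $\int_0^L\cos(2\pi x_1/L)\,dx_1=\int_0^L\sin(2\pi x_1/L)\,dx_1=0$), this component must vanish. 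Thus the only shear direction surviving in $\varrho$ is $\cos(2\pi x_2/H)$, with some coefficient $b''$, and the $x_1$-average of $v$ equals $\bar\omega^s+b''\cos(2\pi x_2/H)$. I then note that $\mathcal O_{\bar\omega}=\{v\in\mathcal M_{\bar\omega}:b''=0\}$: every translate of $\bar\omega$ has the same $x_1$-average $\bar\omega^s$, so $b''=0$; conversely, if $b''=0$ then the preserved $L^2$ norm (from $v\in\mathcal R_{\bar\omega}$) forces the remaining $x_1$-oscillatory amplitude to equal that of $\bar\omega$, whence $v$ is a translate of $\bar\omega$. As $b''$ is a continuous functional on $\bar\omega+\mathbb E_1$, relative openness of $\mathcal O_{\bar\omega}$ follows once $0$ is shown to be an isolated value of $b''$ on $\mathcal M_{\bar\omega}$.

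The argument then splits according to whether $\mathsf I_{\bar\omega}$ vanishes. If $\mathsf I_{\bar\omega}\neq0$, the constraint $\mathsf I_{\bar\omega}\mathsf I_{\mathcal G\varrho}=0$ gives $\mathsf I_{\mathcal G\varrho}=0$; computing $\mathcal G 1=\frac12 x_2(H-x_2)$ and $\int_0^H x_2^2\cos(2\pi x_2/H)\,dx_2=H^3/(2\pi^2)$ yields $\mathsf I_{\mathcal G\varrho}=-b''H^2/(4\pi^2)$, so $b''=0$ identically and $\mathcal M_{\bar\omega}=\mathcal O_{\bar\omega}$ is isolated trivially (this also covers the shear case $H/L>\sqrt3/2$, where $L^2$ alone already confines $b''$ to finitely many values). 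The genuine difficulty is the degenerate case $\mathsf I_{\bar\omega}=0$ — which contains the principal non-shear examples, where $\bar\omega^s\equiv0$ — since the linear constraints no longer pin down $b''$. I argue by contradiction: if $0$ were not isolated, there would be $v_n\in\mathcal M_{\bar\omega}$ with $b''(v_n)\to0$, $b''(v_n)\neq0$. Using the $L^2$ identity to write the squared oscillation amplitude $\rho^2$ as a quadratic polynomial in $b''$, each Casimir $\int_D v^k\,d\mathbf x$ becomes — after integrating out the $x_1$-phase, which annihilates all odd powers of $\rho$ — a \emph{polynomial} $P_k(b'')$, and $v_n\in\mathcal R_{\bar\omega}$ forces $P_k(b''(v_n))=\int_D\bar\omega^k\,d\mathbf x$ for all $k$. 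Since $\{b''(v_n)\}$ accumulates at $0$, the identity theorem for polynomials gives $P_k\equiv\int_D\bar\omega^k\,d\mathbf x$ for every $k$.

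The last step, and the main obstacle, is thus to exhibit one Casimir whose polynomial is non-constant. The natural candidate is the quartic moment: expanding $v^4$ and using $\int_0^L\cos^{2m+1}(2\pi x_1/L)\,dx_1=0$ together with $\int_0^H\cos^2(2\pi x_2/H)\sin^2(\pi x_2/H)\,dx_2=H/4$ gives, in the representative case $\bar\omega^s\equiv0$ (where $\rho^2=r_0^2-2(b'')^2$), the expression $P_4(b'')=\frac{LH}{64}\left(9r_0^4+12r_0^2(b'')^2-36(b'')^4\right)$, which is non-constant and has $b''=0$ as an isolated root of $P_4-\int_D\bar\omega^4\,d\mathbf x$ — contradicting $P_4\equiv\text{const}$. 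Hence $0$ is isolated in $\{b''(v):v\in\mathcal M_{\bar\omega}\}$, and $\mathcal O_{\bar\omega}$ is relatively open, completing the proof. I expect the only real labor to be verifying that for a \emph{general} admissible $\bar\omega^s$ the quartic moment (or, failing that, some higher even moment) remains non-constant in $b''$; this is precisely where the rigidity of the rearrangement class carries the argument, and where a careful but elementary computation is required.
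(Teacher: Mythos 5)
Your overall strategy is the same as the paper's: reduce to the finite-dimensional affine space $\bar\omega+\mathbb E_1$ via \eqref{inpr}, use the impulse constraint to kill the $\sin(2\pi x_2/H)$-direction, identify $\mathcal O_{\bar\omega}$ with the locus where the $\cos(2\pi x_2/H)$-coefficient $b''$ of $\varrho$ vanishes, and then use the Casimirs $\int_D v^m\,d\mathbf x$ to show $b''$ can only take isolated values. The setup, the $L^2$-identity eliminating the oscillatory amplitude, and the case $\mathsf I_{\bar\omega}\neq 0$ are all correct, and your $P_4$ computation for $\bar\omega^s\equiv 0$ checks out.

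However, there is a genuine gap exactly where you flag it: the entire lemma rests on exhibiting \emph{one} Casimir whose polynomial in $b''$ is non-constant, and you only verify this in the special case $\bar\omega^s\equiv 0$, explicitly deferring the general case to an unperformed computation. As written this is not a proof, because it is not obvious a priori that the quartic (or any particular even) moment survives cancellation against a general $\tilde\omega^s$. Moreover, your restriction to \emph{even} moments is a red herring: integrating out the $x_1$-phase kills odd powers of the oscillatory amplitude $b'$, but odd moments of $v$ are still perfectly good polynomials in $b''$. The paper closes the gap by combining the $m=2$ and $m=3$ moments: substituting the quadratic relation $y^2=\alpha-2x^2-px$ into the cubic Casimir yields
\[
2x^3+(p+q-2r)x^2+(\mu-rp-\alpha)x=\beta-r\alpha,
\]
whose leading coefficient is $2$ \emph{independently of} $\bar\omega^s$. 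Hence the admissible values of $x=a'$ form a set of at most three points, distinct solutions have distinct $x$, and isolation follows at once — no identity-theorem detour and no case analysis on $\bar\omega^s$ is needed. If you replace your quartic candidate by the cubic moment, your argument becomes complete and essentially coincides with the paper's.
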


\begin{proof}

We only give the proof for the case $H/L=\sqrt{3}/2$. The proofs for the other two cases are similar (and simpler).  Recall that when $H/L=\sqrt{3}/2$,
\[ \mathbb E_1={\rm span}\left\{\sin\left(\frac{2\pi x_2}{H}\right),\,\cos\left(\frac{2\pi x_2}{H}\right),\,\cos\left(\frac{2\pi x_1}{L}\right)\sin\left(\frac{\pi x_2}{H}\right),\,
\sin\left(\frac{2\pi x_1}{L}\right)\sin\left(\frac{\pi x_2}{H}\right)\right\}.\]
Fix $v\in\mathcal M_{\bar\omega}\setminus \mathcal O_{\bar\omega}$. Our aim is to show that \emph{ $\|v-\bar\omega\|_{L^p(D)}$ has a positive lower bound  depending only on $\bar\omega$.} 

By Proposition \ref{keyprop2} and Lemma \ref{lem422}, we can write
\[\bar\omega=\bar\omega^s+s\sin\left(\frac{2\pi x_2}{H} \right)+ a\cos\left(\frac{2\pi x_2}{H} \right)+ b\cos\left(\frac{2\pi x_1}{L}+ \beta\right)\sin\left(\frac{\pi x_2}{H}\right),\]
\[v=\bar\omega^s+s'\sin\left(\frac{2\pi x_2}{H} \right)+ a'\cos\left(\frac{2\pi x_2}{H} \right)+ b'\cos\left(\frac{2\pi x_1}{L}+ \beta'\right)\sin\left(\frac{\pi x_2}{H}\right),\]
where $s,a,b,\beta,s',a',b',\beta'\in\mathbb R$. Without loss of generality, we can assume that $b,b'\geq 0.$
Since   $I(\bar\omega)=I(v)$ and  
\[I\left(\sin\left(\frac{2\pi x_2}{H} \right)\right)\neq 0,\quad I\left(\cos\left(\frac{2\pi x_2}{H} \right)\right)= I\left(\cos\left(\frac{2\pi x_1}{L}\right)\sin\left(\frac{\pi x_2}{H}\right) \right)= 0,\]
 we get
 \begin{equation}\label{ssp}
 s=s'.
 \end{equation}  
 Using $v\in\mathcal R_{\bar\omega}$, we have that
 \begin{equation}\label{intmm0}
 \int_D\bar\omega^md\mathbf x=\int_Dv^md\mathbf x
 \end{equation}
 for any positive integer $m$.   Denote
 \[\tilde\omega^s:=\bar\omega^s+s\sin\left(\frac{2\pi x_2}{H} \right).\]
 Then \eqref{intmm0} becomes
  \begin{equation}\label{intmm}
  \begin{split}
 &\int_D\left[\tilde\omega^s+ a\cos\left(\frac{2\pi x_2}{H} \right)+ b\cos\left(\frac{2\pi x_1}{L}\right)\sin\left(\frac{\pi x_2}{H}\right)\right]^md\mathbf x\\
=&\int_D\left[\tilde\omega^s+ a'\cos\left(\frac{2\pi x_2}{H} \right)+ b'\cos\left(\frac{2\pi x_1}{L}\right)\sin\left(\frac{\pi x_2}{H}\right)\right]^md\mathbf x.
 \end{split}
 \end{equation}
Taking $m=2$ in \eqref{intmm}, we get 
   \begin{equation}\label{req1}
   \begin{split}
   &2a\int_D\tilde\omega^s\cos \left(\frac{2\pi x_2}{H}\right)d\mathbf x+ a^2\int_D\cos^2\left(\frac{2\pi x_2}{H}\right)d\mathbf x+b^2\int_D \cos^2\left(\frac{2\pi x_1}{L} \right)\sin^2\left(\frac{\pi x_2}{H}\right)d\mathbf x\\
   =&2a'\int_D\tilde\omega^s\cos \left(\frac{2\pi x_2}{H}\right)d\mathbf x+ a'^2\int_D\cos^2\left(\frac{2\pi x_2}{H}\right)d\mathbf x+b'^2\int_D \cos^2\left(\frac{2\pi x_1}{L} \right)\sin^2\left(\frac{\pi x_2}{H}\right)d\mathbf x,
   \end{split}
   \end{equation}
 which can be simplified as
\[ 2 a^2 +  b^2+pa =2 a'^2 +  b'^2+pa',\]
where
\[ p:=\frac{8}{LH} \int_D\tilde\omega^s\cos \left(\frac{2\pi x_2}{H}\right)d\mathbf x.\]
  Similarly, taking $m=3$ in \eqref{intmm} gives 
  \begin{equation}\label{req2}
  \begin{split}
   &a^2\int_D\tilde\omega^s\cos^2 \left(\frac{2\pi x_2}{H}\right)d\mathbf x+b^2\int_D\tilde\omega^s\cos^2 \left(\frac{2\pi x_1}{L}\right)\sin^2\left(\frac{\pi x_2}{H}\right)d\mathbf x+a\int_D(\tilde\omega^s)^2\cos  \left(\frac{2\pi x_2}{H}\right)d\mathbf x\\
  &+  ab^2\int_D\cos  \left(\frac{2\pi x_2}{H}\right)\cos^2 \left(\frac{2\pi x_1}{L}\right)\sin^2\left(\frac{\pi x_2}{H}\right)d\mathbf x\\
   =&a'^2\int_D\tilde\omega^s\cos^2 \left(\frac{2\pi x_2}{H}\right)d\mathbf x+b'^2\int_D\tilde\omega^s\cos^2 \left(\frac{2\pi x_1}{L}\right)\sin^2\left(\frac{\pi x_2}{H}\right)d\mathbf x+a'\int_D(\tilde\omega^s)^2\cos  \left(\frac{2\pi x_2}{H}\right)d\mathbf x\\
  &+  a'b'^2\int_D\cos  \left(\frac{2\pi x_2}{H}\right)\cos^2 \left(\frac{2\pi x_1}{L}\right)\sin^2\left(\frac{\pi x_2}{H}\right)d\mathbf x,
  \end{split}
  \end{equation} 
 which can be simplified as
  \[-ab^2+qa^2+rb^2+\mu a=-a'b'^2+qa'^2+rb'^2+\mu a',\]
  where
  \[q:= \frac{8}{LH}  \int_D\tilde\omega^s\cos^2 \left(\frac{2\pi x_2}{H}\right)d\mathbf x,\]
   \[r:= \frac{8}{LH} \int_D\tilde\omega^s\cos^2 \left(\frac{2\pi x_1}{L}\right)\sin^2\left(\frac{\pi x_2}{H}\right)d\mathbf x,
 \]
 \[\mu:=\frac{8}{LH} \int_D(\tilde\omega^s)^2\cos  \left(\frac{2\pi x_2}{H}\right)d\mathbf x.\]

  To summarize, we have proved that $(a,b)$ and $(a',b')$ are two solutions to the following system of algebraic equations for some $\alpha,\beta\in\mathbb R$:
  \begin{equation}\label{abeq0} 
  \begin{cases} 2 x^2 +  y^2+px =\alpha,\\
  -xy^2+qx^2+ry^2+\mu x=\beta,\\
  y\geq 0.
  \end{cases}
  \end{equation}
Inserting the first equation of \eqref{abeq0} into the second one gives
  \begin{equation}\label{abeq1} 2x^3+(p+q-2r)x^2+(\mu-rp-\alpha)x=\beta-r\alpha.
  \end{equation} 
From \eqref{abeq1}, we can easily deduce the following two facts about \eqref{abeq0}:
\begin{itemize}
  \item [(a)] there are at most a finite number of solutions, and
  \item [(b)] if $(x,y)$ and $(x',y')$ are two different solutions, then  $x\neq x'.$
\end{itemize}
 Since we have assumed  that $v\notin\mathcal O_{\bar\omega},$ it follows that $(a,b)\neq (a',b')$, and thus $a\neq a'$ from the above discussion.   Then, by orthogonality, 
\[\|v-\bar\omega\|_{L^2(D)}> \delta_0\]
for some $\delta_0>0$ depending only on $\bar\omega.$ In combination with the fact that $\mathcal R_{\bar\omega}$ is bounded in $L^\infty(D)$, we further deduce that 
 \[\|v-\bar\omega\|_{L^p(D)}>\delta_1\]
for some $\delta_1>0$ depending only on $\bar\omega.$ This finishes the proof.

\end{proof}

\begin{remark}
In general, one cannot expect that $\mathcal M_{\bar\omega}=\mathcal O_{\bar\omega}.$ For example, when
\[\bar\omega=\cos\left(\frac{2\pi x_2}{H}\right)\in\mathbb E_1,\]
it holds that $\mathcal O_{\bar\omega}=\{\bar\omega\}$,
while $\mathcal M_{\bar\omega}=\{\bar\omega,\,-\bar\omega\}$.

\end{remark}

\begin{proof}[Proof of Theorem \ref{thm2}]
   The rigidity assertion (i) follows from Lemma \ref{lem422}, and the stability assertion (ii) follows from Theorem \ref{bsc} and Lemma \ref{lem423}.  
\end{proof}

\bigskip

 \noindent{\bf Acknowledgements:}
G. Wang was supported by NNSF of China grant  12471101.

\bigskip
\noindent{\bf  Data Availability} Data sharing not applicable to this article as no datasets were generated or analysed during the current study.

\bigskip
\noindent{\bf Declarations}

\bigskip
\noindent{\bf Conflict of interest}  The author declare that they have no conflict of interest to this work.

\phantom{s}
 \thispagestyle{empty}

\end{document}